\numberwithin{equation}{section}
\newtheorem{theorem}{Theorem}[section]
\newtheorem{lemma}[theorem]{Lemma}
\newtheorem{prop}[theorem]{Proposition}
\newtheorem*{con7*}{Conjecture 7*}
\newtheorem{Property}[theorem]{Property}
\newtheorem{Example}[theorem]{Example}
\newtheorem*{Conjecture}{Conjecture}
\newtheorem*{Main Theorem}{Main Theorem}
\theoremstyle{definition}
\newtheorem{Definition}[theorem]{Definition}
\theoremstyle{remark}
\newtheorem*{Remark}{Remark}
\numberwithin{equation}{theorem}
\DeclareMathOperator{\wgd}{w.gl.dim}
\DeclareMathOperator{\Tor}{Tor}
\DeclareMathOperator{\Spec}{Spec}
\DeclareMathOperator{\Max}{Max}
\DeclareMathOperator{\Ker}{Ker}
\DeclareMathOperator{\Img}{Im}
\begin{document}

\title[On a Conjecture on the weak global dimension of Gaussian rings]{On a
Conjecture on the weak global dimension of Gaussian rings}

\author{Guram Donadze \and Viji Z. Thomas}
 \address{G. Donadze\\ Department of Algebra, University of Santiago de Compostela,
15782, Spain.}
 \email {gdonad@gmail.com}
\address{V.Z. Thomas \\ School of Mathematics, Tata Institute of Fundamental Research,
Mumbai, Maharashtra 400005, India.}
\email{vthomas@math.tifr.res.in}
\subjclass[2010]{Primary 13B25,13D05,13F05,16E30 Secondary 16E65}
\keywords{Gaussian rings, weak dimension, content, non-noetherian rings}
\begin{abstract} In \cite{BG}, Bazzoni and Glaz conjecture that the weak global
dimension of a Gaussian ring is $0,1$ or $\infty$. In this paper, we prove their
conjecture in all cases except when $R$ is a non-reduced local Gaussian ring with
nilradical $\mathcal{N}$ satisfying $\mathcal{N}^2=0$.
\end{abstract}

\maketitle

\section{Introduction}
In her Thesis \cite{T}, H. Tsang, a student of Kaplansky introduced Gaussian rings.
Noting that the content of a polynomial $f$ over a commutative ring $R$ is the ideal
$c(f)$ generated by the coefficients of $f$, we now define a Gaussian ring.

\begin{Definition}
A polynomial $f\in R[x]$ is called Gaussian if $c(f)c(g)=c(fg)$ for all $g\in R[x]$.
The ring $R$ is called Gaussian if each polynomial in $R[x]$ is Gaussian.
\end{Definition}

Among other things, H. Tsang (\cite{T}) proved that an integral domain is Gaussian
if and only if it is Pr\"ufer (see Definition \ref{pru}), a result also proved
independently by R. Gilmer in \cite{RG}. Thus Gaussian rings provide another class
of rings extending the class of Pr\"ufer domains to rings with zero divisors. In
\cite{BG}, the authors consider five possible extensions of the Pr\"ufer domain
notion to the case of commutative rings with zero divisors, two among which are
Gaussian rings and rings with weak global dimension (see Definition \ref{weak}) at
most one. The authors also consider the problem of determining the possible values
for the weak global dimension of a Gaussian ring. At the end of their article, the
authors make the following conjecture.

\begin{Conjecture}[Bazzoni-Glaz, \cite{BG}] The weak global dimension of a Gaussian
ring is either $0,1$ or $\infty$.
\end{Conjecture}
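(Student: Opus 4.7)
\medskip
\noindent\textbf{Proof plan.}

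The plan is first to reduce the conjecture to a statement about local Gaussian rings and then to split on the behaviour of the nilradical. Since $\wgd(R)=\sup_{\mathfrak m\in\Max R}\wgd(R_{\mathfrak m})$ and the Gaussian property is preserved by localization, I may assume throughout that $R$ is local with maximal ideal $\mathfrak m$ and nilradical $\mathcal N$.

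If $\mathcal N=0$, I would appeal to the theorem of Bazzoni--Glaz from \cite{BG} asserting that a reduced local Gaussian ring has weak global dimension at most $1$; this already places us in the $0$ or $1$ branch of the conjectured dichotomy, and it remains to treat the non-reduced case, for which the target value is $\infty$.

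Now assume $R$ is non-reduced and, more specifically, that $\mathcal N^2\neq 0$. To show $\wgd(R)=\infty$ I would exhibit an $R$-module of infinite flat dimension by analysing $R/aR$ for a carefully chosen $a\in\mathcal N$ with $a^2\neq 0$. Applying the Gaussian identity $c(f)c(g)=c(fg)$ to polynomials whose coefficients are built from $a$ together with elements of $\mathrm{Ann}(a)$ and of $\mathrm{Ann}(a^{k})$ for larger $k$, I would extract enough control over the ascending chain
\[
\mathrm{Ann}(a)\subseteq\mathrm{Ann}(a^2)\subseteq\cdots
\]
to construct an almost-periodic free resolution
\[
\cdots\to R\xrightarrow{\alpha_3}R\xrightarrow{\alpha_2}R\xrightarrow{\alpha_1}R\twoheadrightarrow R/aR\to 0
\]
in which $\Img\alpha_{i+1}\subsetneq\Ker\alpha_i$ survives after tensoring with a suitable cyclic module, forcing $\Tor_n^R(R/aR,-)\neq 0$ for all $n\ge 1$ and hence $\wgd(R)=\infty$.

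The principal obstacle is precisely the construction of the maps $\alpha_i$ from the Gaussian identity: one must relate annihilators of successive powers of $a$ without the chain collapsing prematurely, and one must then verify that the resolution remains non-exact after a well-chosen tensoring. Crucially, the argument genuinely requires $a^2\neq 0$, because when $\mathcal N^2=0$ every nilpotent squares to zero, the chain of annihilators of powers trivialises, and this strategy degenerates. That is exactly why the case $\mathcal N^2=0$ is singled out as the remaining open case in the abstract and would require a substantially different tool.
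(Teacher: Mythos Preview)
Your high-level reduction is sound and matches the paper: pass to a local Gaussian ring, dispose of the reduced case via the Bazzoni--Glaz result, and isolate $\mathcal N^2=0$ as the residual open case. The gap is in the heart of the matter, the step where you ``would extract enough control'' from the content identity to build an almost-periodic free resolution of $R/aR$. No maps $\alpha_i$ are actually produced, and the Gaussian identity $c(f)c(g)=c(fg)$ does not in any evident way pin down the annihilator chain $\mathrm{Ann}(a)\subseteq\mathrm{Ann}(a^2)\subseteq\cdots$ well enough to make the complex exact; in a local Gaussian ring one only knows $(0:a)\subseteq D$ for $a\notin D$ (Theorem~\ref{thm2.3}(iii)), which is far from $(0:a)=a^{n-1}R$. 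The periodic-resolution picture you have in mind is what happens in a local \emph{arithmetical} ring, but $R$ itself need not be arithmetical---only $R/D$ is---and the obstruction coming from $D$ is exactly what makes the problem hard.

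The paper's route is quite different and explains why a single-element argument is unlikely to suffice. One first localizes at $\mathcal N$ so that the maximal ideal coincides with the nilradical (Lemma~\ref{lemma5.1} guarantees this localization is still non-reduced when $\mathcal N\neq D$), and then splits on whether that maximal ideal is nilpotent. The nilpotent case is handled by the corrected Bazzoni--Glaz argument (Theorem~\ref{thm5.1}). The genuinely new content is the non-nilpotent case (Theorem~\ref{thm1}), which is proved not by exhibiting an explicit infinite resolution but by a contradiction argument on $\Tor_n$: assuming $\wgd(R)=n<\infty$, a chain of lemmas (Lemmas~\ref{lemma2}--\ref{lemma8}) shows simultaneously that $\Tor_n(R'/aR',-)$ must vanish and that it cannot, using the arithmetical structure of $R'=R/D$, direct-limit arguments, and a dichotomy on Property~\ref{hypo1}. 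Your plan does not anticipate this nilpotent/non-nilpotent bifurcation, and it is precisely the non-nilpotent branch---where no single element has bounded nilpotency controlling the whole ideal---that resists a periodic-resolution attack.
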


 In \cite{SG}, the author shows that the weak global dimension of a coherent
Gaussian ring is either $\infty$ or at most one. She also shows that the weak
global dimension of a Gaussian ring is at most one if and only if it is reduced. So
to prove the conjecture it is enough to show that $\wgd R=\infty$ for all
non-reduced Gaussian rings $R$. Since $\wgd R=\sup\{\wgd R_{\mathfrak{p}} \mid
\mathfrak{p}\in \Spec (R)\}$, it is enough to prove the conjecture for non-reduced
local Gaussian rings. For any non reduced local Gaussian ring $R$ with nilradical
$\mathcal{N}$, either $(i)$ $\mathcal{N}$ is nilpotent or $(ii)$ $\mathcal{N}$ is
not nilpotent. Except when $\mathcal{N}^2=0$, the authors of \cite{BG} prove that
if $R$ satisfies $(i)$, then $\wgd R=\infty$. In this paper we prove that if $R$
satisfies $(ii)$, then $\wgd R=\infty$ (cf. Theorem \ref{main}). In fact the
authors of \cite{BG} do not exclude the case $\mathcal{N}^2=0$ (\cite[Theorem
6.4]{BG}), but we exclude this case for correctness (cf. Section 5, Theorem
\ref{thm5.1}). In this paper we solve the Bazzoni-Glaz conjecture in all cases
except this case. With this in mind, we now state our Main theorem.

\begin{Main Theorem}
Let $R$ be a non-reduced local Gaussian ring with nilradical $\mathcal{N}$. If
$\mathcal{N}^2\neq 0$, then $\wgd (R)=\infty$.
\end{Main Theorem}

In a recent paper \cite{AJK}, the authors have validated the Bazzoni-Glaz conjecture
for the class of rings called fqp-rings. The class of fqp-rings fall strictly
between the classes of arithmetical rings and Gaussian rings.

In Section 3, we consider some homological properties of local Gaussian rings. In
particular we consider local Gaussian rings $(R,\mathfrak{m})$ which are not fields,
with the property that each element of $\mathfrak{m}$ is a zero divisor. In this
case we prove that $\wgd R\geq 3$.

In  \cite[Section 6]{BG}, the authors consider local Gaussian rings $(R,
\mathfrak{m})$ such that the maximal ideal $\mathfrak{m}$ coincides with the
nilradical of $R$. With this set up in Section 4, we prove that if $\mathfrak{m}$ is
not nilpotent, then $\wgd R=\infty$.

Finally in Section 5, we prove our main theorem. As a result of our Main Theorem, we
reduce the Bazzoni-Glaz conjecture to the following {\bf Conjecture}: Let $R$ be a
non-reduced local Gaussian ring with nilradical $\mathcal{N}$. If $\mathcal{N}^2=0$,
then $\wgd (R)=\infty$.

Throughout this paper, $R$ is a commutative ring with unit, $(R,\mathfrak{m})$ is a
local ring(not necessarily Noetherian) with unique maximal ideal $\mathfrak{m}$. We
denote the set of all prime ideals of $R$ by $\Spec(R)$ and the set of all maximal
ideals by $\Max(R)$.

\section{Preliminary Results}

In this section we will recall some definitions and results that we will need in
later sections.

\begin{Definition}
The flat dimension $fd(M)$ is the minimum integer (if it exists) such that there is
a resolution of $M$ by flat $R$ modules
$0\to F_n\to \cdots\to F_1\to F_0\to M\to 0$. If no finite resolution by flat $R$
modules exists for $M$, then we set $fd(M)=\infty$.
\end{Definition}

Now we define the weak global dimension of a ring $R$ denoted as $\wgd(R)$. It is
also sometimes suggestively called as the $\Tor$-dimension.

\begin{Definition}\label{weak}
$\wgd(R)$=$\sup$\{$fd(M) \mid M$ is an $R$-module\}.
\end{Definition}

Recall that $\wgd(R)$=$\sup$\{$d \mid \Tor_d(M,N)\neq 0$ for some $R$-modules
$M,N$\}. The $\wgd(R)\leq 1$ if and only if every ideal of $R$ is flat, or
equivalently, if and only if every finitely generated ideal of $R$ is flat.

We now define Pr\"ufer domain. A Noetherian Pr\"ufer domain is a Dedekind domain.

\begin{Definition}\label{pru}
A domain is Pr\"ufer if every non-zero finitely generated ideal is invertible.
\end{Definition}

L. Fuchs introduced the class of arithmetical rings in \cite{F}.

\begin{Definition}
A ring $R$ is arithmetical if the lattice of the ideals of $R$ is distributive.
\end{Definition}

In \cite{J}, the author characterized arithmetical rings by the property that in
every localization at a maximal ideal, the lattice of the ideals is linearly ordered
by inclusion. Hence in a local arithmetical ring, the lattice of the ideals is
linearly ordered by inclusion. Thus local arithmetical rings provide another class
of rings extending the class of valuation domains to rings with zero-divisors.

The next theorem appears in Tsang's (see \cite{T}) unpublished thesis.

\begin{theorem}[\cite{T}]\label{thm2.1}
Let $R$ be a Gaussian ring. If $R$ is local, then
\begin{itemize}
  \item[(i)] $R$ is Gaussian if and only if $R_{\mathfrak{m}}$ is Gaussian for all
$\mathfrak{m}\in \Max(R)$;
   \item[(ii)] $R$ is Gaussian if and only if $R_{\mathfrak{p}}$ is Gaussian for all
$\mathfrak{p}\in \Spec(R)$;
   \item[(iii)] the prime ideals of $R$ are linearly ordered under inclusion; and
 \item[(iv)] the nilradical of $R$ is the unique minimal prime ideal of $R$.
\end{itemize}
\end{theorem}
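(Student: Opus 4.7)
The statement falls naturally into two groups: (i)--(ii) concern the localization behavior of the Gaussian property, while (iii)--(iv) describe the prime spectrum of a local Gaussian ring. I would attack these groups by quite different means.

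For (i) and (ii), my plan is to exploit that the content operator commutes with localization: for any multiplicative set $S \subseteq R$ and any $f \in R[x]$, if $\bar f$ denotes the image of $f$ in $(S^{-1}R)[x]$, then $c(\bar f) = c(f) \cdot S^{-1}R$. The forward implications go by clearing denominators: given $f', g' \in (S^{-1}R)[x]$, write $f' = f/s$, $g' = g/t$ with $f, g \in R[x]$ and $s, t \in S$, apply $c(fg) = c(f)c(g)$ in $R$, and push the identity forward to $S^{-1}R$. Specializing $S = R \setminus \mathfrak{p}$ handles both (i) and (ii) in the forward direction. For the converses, the inclusion $c(fg) \subseteq c(f)c(g)$ is automatic from polynomial multiplication, so the Gaussian equation is equivalent to the reverse inclusion between two \emph{finitely generated} ideals, a property which may be tested locally at maximal ideals. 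Hence if each $R_\mathfrak{m}$ is Gaussian, the reverse inclusion holds after every localization at a maximal ideal and therefore holds in $R$. The same loop yields (ii), since a localization of a Gaussian ring at a prime is still Gaussian.

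For (iii) and (iv), the key computational input is the identity $(a,b)^2 = (a^2,b^2)$ obtained by applying the Gaussian equation to the product $(a+bx)(a-bx) = a^2 - b^2x^2$: indeed $c(fg) = (a^2, b^2)$ while $c(f)c(g) = (a,b)^2 = (a^2, ab, b^2)$, forcing $ab \in (a^2, b^2)$ for all $a, b \in R$. I would prove (iii) by contradiction: suppose $\mathfrak{p}, \mathfrak{q}$ are incomparable primes in the local Gaussian ring $R$, pick $a \in \mathfrak{p} \setminus \mathfrak{q}$ and $b \in \mathfrak{q} \setminus \mathfrak{p}$, and write $ab = r a^2 + s b^2$. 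Reducing in the domain $R/\mathfrak{p}$ gives $\bar s\,\bar b^2 = 0$ with $\bar b \neq 0$, hence $s \in \mathfrak{p}$; symmetrically $r \in \mathfrak{q}$. From here I would iterate the relation on modified pairs (for instance, applying it to $a$ and $sb \in \mathfrak{p}$, or to linear combinations $a + \lambda b$ chosen so that some coefficient becomes a unit in the local ring) to eventually force either $a \in \mathfrak{q}$ or $b \in \mathfrak{p}$. Part (iv) then follows quickly: minimal primes exist in any ring, and in a totally ordered spectrum they must coincide, so $\mathcal{N} = \bigcap_{\mathfrak{p} \in \Spec(R)} \mathfrak{p}$ equals the unique minimal prime.

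The principal obstacle is clearly (iii): a single application of $ab \in (a^2, b^2)$ shows only that the coefficients $r, s$ lie in certain primes, which is not yet a contradiction. The difficulty lies in iterating the Gaussian identity against carefully chosen auxiliary polynomials while exploiting the locality of $R$ (every element outside $\mathfrak{m}$ being a unit) to close the loop. A potential cleaner route, which I would explore in parallel, is to pass to the quotient $R/(\mathfrak{p} \cap \mathfrak{q})$, which remains Gaussian (Gaussianness descending to homomorphic images by a direct check on contents); in that quotient the images of $\mathfrak{p}$ and $\mathfrak{q}$ become incomparable minimal primes all of whose elements are zero-divisors, and the Gaussian equation applied to such elements yields stronger consequences on annihilators that I expect to be enough to conclude.
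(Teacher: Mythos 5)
The paper does not prove this theorem at all: it is imported verbatim from Tsang's thesis \cite{T} (with the localization statements also standard), so there is no in-paper argument to compare yours against; I can only assess your proposal on its own terms. Parts (i) and (ii) are fine: content commutes with localization, the inclusion $c(fg)\subseteq c(f)c(g)$ is automatic, and the reverse inclusion is the vanishing of the module $\bigl(c(f)c(g)+c(fg)\bigr)/c(fg)$, which can be tested at maximal ideals; clearing denominators handles the forward direction. Part (iv) is also fine once (iii) is known, since the intersection of a chain of primes is prime.

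The genuine gap is exactly where you flag it, in (iii). From $(a+bx)(a-bx)$ you get only $ab\in(a^2,b^2)$, and the resulting memberships $r\in\mathfrak{q}$, $s\in\mathfrak{p}$ in $ab=ra^2+sb^2$ do not close the loop: the ``iterate on modified pairs'' step is not carried out, and it is not clear it can be made to terminate, because the single degree-one identity $(a,b)^2=(a^2,b^2)$ is genuinely weaker than what is needed. The missing ingredient is the dichotomy of Theorem \ref{thm2.2}$(iv)(a)$: in a local Gaussian ring, $(a,b)^2=(a^2)$ or $(b^2)$. Granting that, (iii) is a two-line argument: with $a\in\mathfrak{p}\setminus\mathfrak{q}$ and $b\in\mathfrak{q}\setminus\mathfrak{p}$, if $(a,b)^2=(a^2)$ then $b^2\in(a^2)\subseteq\mathfrak{p}$, so $b\in\mathfrak{p}$, a contradiction; the other case is symmetric. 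Deriving that dichotomy from Gaussianness requires testing $c(fg)=c(f)c(g)$ against further polynomials (e.g.\ $(ax+b)(bx+a)$, which gives $(ab,a^2+b^2)=(a,b)^2$) together with locality, and this is the real content of Tsang's argument that your proposal leaves unsupplied. Your alternative route through $R/(\mathfrak{p}\cap\mathfrak{q})$ is likewise only a sketch; passing to that quotient does preserve Gaussianness, but you would still face the same iteration problem there.
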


We will need several equivalent characterizations of local Gaussian rings, which we
now state.

\begin{theorem}\label{thm2.2}
Let $(R,\mathfrak{m})$ be a local ring with maximal ideal $\mathfrak{m}$. The
following conditions are equivalent.
\begin{itemize}
  \item[(i)] $R$ is a Gaussian ring;
   \item[(ii)] If $I$ is a finitely generated ideal of $R$ and $(0:I)$ is the
annihilator of $I$, then $I/{I\cap(0:I)}$ is a cyclic $R$-module;
   \item[(iii)] Condition $(ii)$ for two generated ideals;
 \item[(iv)] For any two elements $a,b\in R$, the following two properties hold:
 \begin{itemize}
   \item[(a)] $(a,b)^2=(a^2)$ or $(b^2)$;
  \item[(b)] If $(a,b)^2=(a^2)$ and $ab=0$, then $b^2=0$.
 \end{itemize}
 \item[(v)] If $I=(a_1,a_2,\ldots,a_n)$ is a finitely generated ideal of $R$, then
$I^2=(a_{i}^2)$ for some $1\leq i\leq n$.
\end{itemize}
\end{theorem}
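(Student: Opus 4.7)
The plan is to establish the equivalences via the cycle $(i) \Rightarrow (ii) \Rightarrow (iii) \Rightarrow (iv) \Rightarrow (v) \Rightarrow (i)$. The bulk of the argument is essentially due to Tsang \cite{T}, but several of the implications can be dispatched quickly using elementary computations. The step $(ii) \Rightarrow (iii)$ is merely the restriction to two generators.

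For $(iii) \Rightarrow (iv)$, given a two-generated ideal $I = (a,b)$, the cyclic quotient $I/(I \cap (0:I))$ is generated (by Nakayama, using the local hypothesis) by the image of $a$ or of $b$. Assuming without loss of generality $b = ra + s$ with $s \in I \cap (0:I)$, the fact that $s$ itself lies in $I$ forces $s^{2} = 0$, and direct substitution then yields $ab = ra^{2}$ and $b^{2} = r^{2}a^{2}$. This proves part (a); under the additional hypothesis $ab = 0$ of part (b), one then gets $ra^{2}=0$ and hence $b^{2}=r(ra^{2})=0$. The symmetric case where $\bar b$ generates is settled by the same calculation together with the local-ring fact that $(1 - tr^{2})b^{2} = 0$ forces $b^{2} = 0$.

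For $(iv) \Rightarrow (v)$ I would induct on the number of generators $n$. Applying (iv)(a) pairwise shows $a_{i}a_{j} \in (a_{i}^{2}) + (a_{j}^{2})$, so $I^{2} = (a_{1}^{2}, \ldots, a_{n}^{2})$. The same condition (a), now applied as the statement that either $(a_{i}^{2}) \subseteq (a_{j}^{2})$ or $(a_{j}^{2}) \subseteq (a_{i}^{2})$, forces the finite family $\{(a_{i}^{2})\}$ to be totally ordered by inclusion, so a maximum element $(a_{k}^{2})$ exists and equals $I^{2}$.

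The main obstacle is the pair $(i) \Leftrightarrow (ii)$ and $(v) \Rightarrow (i)$, which connect the abstract annihilator/square conditions to the defining content identity $c(fg) = c(f)c(g)$. Here I would follow Tsang \cite{T}: for $(i) \Rightarrow (ii)$, apply the Gaussian identity to $f = a_{1} + a_{2}x + \cdots + a_{n}x^{n-1}$ paired against polynomials whose coefficients annihilate $I$, forcing the quotient $I/(I \cap (0:I))$ to collapse onto a single generator; conversely, starting from (v) one rewrites every coefficient product $a_{i}b_{j}$ appearing in $c(f)c(g)$ as an element of $c(fg)$ by iteratively replacing pair-products with squares, giving the reverse inclusion $c(f)c(g) \subseteq c(fg)$ that is the content of the Gaussian property. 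This last step is the technical heart of the theorem and where I would expect the most delicate bookkeeping.
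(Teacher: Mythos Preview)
The paper does not supply its own proof of this theorem: immediately after the statement it simply records that $(iv)\Rightarrow(i)$ is due to Lucas \cite{L} and that the remaining implications are in Tsang's thesis \cite{T}. Your proposal is in the same spirit---you also defer the two substantive steps to Tsang---but you go further by writing out the elementary links $(iii)\Rightarrow(iv)$ and $(iv)\Rightarrow(v)$, and those computations are correct (the Nakayama step reducing a cyclic quotient generated by $\bar a,\bar b$ to one of the two images is valid, and the total ordering of the $(a_i^2)$ under inclusion is exactly what $(iv)(a)$ gives). One structural difference: the paper closes the loop via Lucas's $(iv)\Rightarrow(i)$, whereas you route through $(v)\Rightarrow(i)$; since $(v)$ recovers $(iv)(a)$ but not obviously $(iv)(b)$, Lucas's implication is not a special case of yours, so your final step really does need an independent argument.

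On that point, your verbal description of the Tsang arguments is not quite right. For $(i)\Rightarrow(ii)$ you suggest pairing $f=a_1+a_2x+\cdots$ against polynomials $g$ whose coefficients annihilate $I=c(f)$; but then $fg=0$ and $c(f)c(g)=0$ automatically, so the Gaussian identity carries no information in that pairing. Tsang's actual route extracts information from carefully chosen $g$ with coefficients in $I$ itself (or works via the two-generator case and condition $(iv)$). Since you are explicitly citing \cite{T} for this step anyway, this is a cosmetic issue rather than a gap in the logic, but you should correct the description if you keep it.
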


The implication $(iv)\Rightarrow (i)$ was noted by Lucas in \cite{L} and the rest of
Theorem \ref{thm2.2} was proved by Tsang in \cite{T}. The next two results can be
found in \cite{BG}.

\begin{theorem}[\cite{BG}]\label{thm2.3}
Let $(R,\mathfrak{m})$ be a local Gaussian ring and let $D=\{x\in R \mid x^2=0\}$.
The following hold:
\begin{itemize}
  \item[(i)] $D$ is an ideal of $R$, $D^2=0$, and $R/D$ is an arithmetical ring;
   \item[(ii)] For every $a\in R$, $(0:a)$ and $D$ are comparable and $D\subseteq
Ra+(0:a)$;
   \item[(iii)] If $a\in m\setminus D$, then $(0:a)\subseteq D$;
 \item[(iv)] Let $\mathfrak{m}$ be the nilradical of $R$. If $\mathfrak{m}$ is not
nilpotent, then $\mathfrak{m}=\mathfrak{m^2}+D$ and
$\mathfrak{m^2}=\mathfrak{m^3}$.
 \end{itemize}
\end{theorem}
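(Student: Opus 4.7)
My plan is to establish the four parts in an order that respects their logical dependencies: (iii) first, then (i) and (ii), finally (iv). The main tool throughout is the characterization from Theorem \ref{thm2.2}, especially (iv)(a), that $(a,b)^2$ is either $(a^2)$ or $(b^2)$ for all $a,b\in R$, and (iv)(b), that $(a,b)^2 = (a^2)$ combined with $ab = 0$ forces $b^2 = 0$. For (iii), I split on Theorem \ref{thm2.2}(iv)(a): when $(a,b)^2 = (a^2)$, part (iv)(b) directly yields $b^2 = 0$, so $b \in D$; when $(a,b)^2 = (b^2)$, applying (iv)(b) with the roles of $a$ and $b$ swapped forces $a^2 = 0$, contradicting $a \notin D$, so this case does not occur.

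For (i), closure of $D$ under $R$-multiplication is immediate from $(rx)^2 = r^2 x^2$, while closure under addition and $D^2 = 0$ both follow from the observation that for $x, y \in D$, Theorem \ref{thm2.2}(iv)(a) gives $(x,y)^2 \subseteq (x^2) + (y^2) = 0$, which contains both $(x+y)^2$ and $xy$. To show $R/D$ is arithmetical, by Jensen's characterization it suffices to see that the ideals of the local ring $R/D$ form a chain: given $a, b \in R$ with, say, $(a,b)^2 = (a^2)$, one has $ab = ra^2$, hence $a(b - ra) = 0$, and then (iii) places $b - ra \in (0:a) \subseteq D$ when $a \notin D$, giving $\bar b \in (\bar a)$, while the case $a \in D$ is vacuous. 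For (ii), to get comparability of $(0:a)$ and $D$, I assume $b \in (0:a) \setminus D$: either $b$ is a unit (so $a = 0$ and $D \subseteq (0:a)$ trivially), or $b \in \mathfrak{m} \setminus D$, in which case (iii) applied to $b$ gives $a \in (0:b) \subseteq D$ and then $D^2 = 0$ yields $D \subseteq (0:a)$. For $D \subseteq Ra + (0:a)$, I split $d \in D$ by Theorem \ref{thm2.2}(iv)(a): if $(a,d)^2 = 0$, then $ad = 0$ so $d \in (0:a)$; if $(a,d)^2 = (a^2)$, then $ad = ra^2$ gives $d - ra \in (0:a)$ and hence $d \in Ra + (0:a)$.

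Part (iv) is the substantive one, and where I expect the main obstacle to lie. Since $\mathfrak{m}$ is the nilradical and, by Theorem \ref{thm2.1}(iv), the nilradical is the unique minimal prime, $\mathfrak{m}$ is the only prime of $R$; from (i) the ring $R/D$ is local arithmetical, so its ideals are linearly ordered. The key reduction is the standard chain-ring fact that a non-principal maximal ideal of a chain ring equals its own square. To rule out principality of $\mathfrak{m}/D$: if $\mathfrak{m}/D = (\bar a)$ then $\bar a^n = 0$ (using that $a$ is nilpotent in $R$) gives $\mathfrak{m}^n \subseteq D$, whence $\mathfrak{m}^{2n} \subseteq D^2 = 0$, contradicting the hypothesis that $\mathfrak{m}$ is not nilpotent. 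Hence $\mathfrak{m}/D = (\mathfrak{m}/D)^2$, i.e.\ $\mathfrak{m} = \mathfrak{m}^2 + D$. Squaring this identity and using $D \subseteq \mathfrak{m}$ (every element of $D$ is nilpotent, hence a non-unit) together with $D^2 = 0$ yields $\mathfrak{m}^2 = \mathfrak{m}^4 + \mathfrak{m}^2 D \subseteq \mathfrak{m}^3$, so $\mathfrak{m}^2 = \mathfrak{m}^3$. The main obstacle I anticipate is recognizing the chain-ring reduction together with the non-principality argument; the rest is direct book-keeping with Theorem \ref{thm2.2}.
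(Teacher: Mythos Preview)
The paper does not actually prove Theorem~\ref{thm2.3}: it is quoted verbatim from \cite{BG}, as the sentence immediately preceding it (``The next two results can be found in \cite{BG}'') makes clear. There is therefore no proof in the present paper to compare your argument against.

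For what it is worth, your argument is correct. Parts (i)--(iii) are a clean unwinding of Theorem~\ref{thm2.2}(iv); the only cosmetic remark is that in the arithmetical part of (i) the case $a\in D$ is not ``vacuous'' but trivial, since then $\bar a=0$ and $(\bar a)\subseteq(\bar b)$ automatically. For (iv), your reduction is the standard one: $R/D$ is a chain ring by (i), principality of $\mathfrak{m}/D$ would force $\mathfrak{m}^n\subseteq D$ for some $n$ and hence $\mathfrak{m}^{2n}=0$, so $\mathfrak{m}/D$ is non-principal and therefore idempotent; squaring $\mathfrak{m}=\mathfrak{m}^2+D$ and using $D^2=0$, $D\subseteq\mathfrak{m}$ gives $\mathfrak{m}^2=\mathfrak{m}^3$. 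All steps are sound.
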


\begin{prop}\label{prop2.1}
Let $(R,\mathfrak{m})$ be a local Gaussian ring. If $\mathfrak{m}$ is non-zero and
nilpotent, then $\wgd  \mathfrak{m}=\infty$.
\end{prop}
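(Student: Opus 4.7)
The plan is to reduce the claim to showing $\Tor_n^R(R/\mathfrak{m},R/\mathfrak{m})\neq 0$ for every $n\geq 0$, and then verify this via a minimal free resolution. Let $k\geq 2$ be the minimal integer with $\mathfrak{m}^k=0$, so $\mathfrak{m}^{k-1}\neq 0$. Applying the long exact $\Tor$-sequence to $0\to\mathfrak{m}\to R\to R/\mathfrak{m}\to 0$ yields $\Tor_{n+1}^R(R/\mathfrak{m},R/\mathfrak{m})\cong\Tor_n^R(\mathfrak{m},R/\mathfrak{m})$ for $n\geq 1$. Moreover $R/\mathfrak{m}$ is not flat, as $\mathfrak{m}\otimes_R R/\mathfrak{m}=\mathfrak{m}/\mathfrak{m}^2$ is nonzero (else $\mathfrak{m}=\mathfrak{m}^2=\cdots=\mathfrak{m}^k=0$). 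Hence $fd_R(R/\mathfrak{m})=fd_R(\mathfrak{m})+1$ whenever the right-hand side is finite, and the proposition reduces to proving $fd_R(R/\mathfrak{m})=\infty$.

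I would construct a minimal free resolution $\cdots\to F_2\to F_1\to F_0\to R/\mathfrak{m}\to 0$ with $K_n:=\ker\partial_n\subseteq\mathfrak{m}F_n$ for all $n\geq 0$. The main technical tool is a substitute for Nakayama valid for modules killed by $\mathfrak{m}^k$: if $\mathfrak{m}^kM=0$ and $T\subseteq M$ maps to a generating set of $M/\mathfrak{m}M$, then $M=\langle T\rangle+\mathfrak{m}M=\langle T\rangle+\mathfrak{m}^kM=\langle T\rangle$. Each syzygy $K_{n-1}\subseteq F_{n-1}$ satisfies $\mathfrak{m}^kK_{n-1}\subseteq\mathfrak{m}^kF_{n-1}=0$, so this applies. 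I would choose $S_n\subseteq K_{n-1}$ lifting a basis of the $R/\mathfrak{m}$-vector space $K_{n-1}/\mathfrak{m}K_{n-1}$, set $F_n=R^{(S_n)}$, and define $\partial_n$ by $e_s\mapsto s$; the kernel of the resulting surjection lies inside $\mathfrak{m}F_n$, since a relation $\sum r_se_s$ with some $r_{s_0}$ a unit would, after reduction mod $\mathfrak{m}$, express $\overline{s_0}$ as an $R/\mathfrak{m}$-linear combination of the other $\overline{s}$, contradicting the linear independence of the chosen basis.

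The heart of the argument is to show $K_n\neq 0$ for every $n\geq 0$, which guarantees $S_{n+1}\neq\varnothing$ and hence $F_{n+1}\neq 0$. The case $n=0$ is immediate: $K_0=\mathfrak{m}\neq 0$. For $n\geq 1$ I would argue by contradiction. Suppose $K_n=0$; then the minimal surjection $F_n\twoheadrightarrow K_{n-1}$ is an isomorphism, so $K_{n-1}$ is free. Fix a basis element $x\in K_{n-1}\subseteq\mathfrak{m}F_{n-1}$ and write $x=\sum m_jf_j$ with $m_j\in\mathfrak{m}$. Then for any $a\in\mathfrak{m}^{k-1}$,
\[ax=\sum(am_j)f_j=0,\]
because $am_j\in\mathfrak{m}^k=0$. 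Freeness of $K_{n-1}$ gives $\operatorname{Ann}_R(x)=0$, hence $\mathfrak{m}^{k-1}=0$, contradicting the minimality of $k$.

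With $K_n\neq 0$ in every degree, the nilpotent Nakayama forces $K_n\neq\mathfrak{m}K_n$, so $S_{n+1}\neq\varnothing$ and $F_{n+1}\neq 0$. Since $\partial_n(F_n)\subseteq\mathfrak{m}F_{n-1}$, every differential in the complex $F_\bullet\otimes_R R/\mathfrak{m}$ vanishes, hence $\Tor_n^R(R/\mathfrak{m},R/\mathfrak{m})=F_n\otimes_R R/\mathfrak{m}\neq 0$ for all $n\geq 0$. This gives $fd_R(R/\mathfrak{m})=\infty$ and, by the first paragraph, $fd_R(\mathfrak{m})=\infty$. The principal obstacle is the absence of Noetherianity, which would have made the construction of a minimal resolution standard; here the nilpotency of $\mathfrak{m}$ is precisely what supplies the Nakayama substitute in its place.
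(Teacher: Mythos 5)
Your proof is correct. A preliminary remark: the paper does not actually prove Proposition \ref{prop2.1} --- it is quoted from Bazzoni--Glaz \cite{BG} without proof --- so there is no in-paper argument to compare against; your write-up serves as a complete, self-contained substitute. The steps all check: nilpotency of $\mathfrak{m}$ supplies a Nakayama lemma valid for arbitrary (not necessarily finitely generated) modules, which is exactly what is needed to build a minimal free resolution of $R/\mathfrak{m}$ in the absence of any Noetherian or coherence hypothesis; the syzygies $K_n$ are nonzero in every degree, since otherwise $K_{n-1}$ would be a nonzero free submodule of $\mathfrak{m}F_{n-1}$, whose basis elements are annihilated by $\mathfrak{m}^{k-1}\neq 0$, contradicting freeness; and minimality forces the differentials of $F_\bullet\otimes_R R/\mathfrak{m}$ to vanish, giving $\Tor_n(R/\mathfrak{m},R/\mathfrak{m})\cong F_n/\mathfrak{m}F_n\neq 0$ for all $n$, hence $fd_R(R/\mathfrak{m})=\infty$ and, by dimension shifting along $0\to\mathfrak{m}\to R\to R/\mathfrak{m}\to 0$, $fd_R(\mathfrak{m})=\infty$ as claimed. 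Worth noting: your argument never invokes the Gaussian hypothesis, so you have in fact proved the more general statement that \emph{any} local ring with nonzero nilpotent maximal ideal has $fd_R(\mathfrak{m})=\infty$ (and hence infinite weak global dimension); this is the correct level of generality of the fact, and the Gaussian assumption in the statement is simply inherited from the context in which \cite{BG} use it.
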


\section{Some results on local Gaussian rings}

It is well known that if the $\wgd R=n$, then there exists a cyclic $R$-module, say
$R/I$ such that $\wgd R/I=n$. In the next lemma we show that this cyclic module can
be chosen with some additional properties.

\begin{lemma}\label{lemma2}
Let $R$ be local Gaussian ring with $\wgd(R)=n$ and let $I$ be an ideal of $R$. If
the $\wgd_R(R/I)=n$, then there exists an ideal $J\subset R$
such that $\wgd_R(R/J)=n$ and $J\supseteq I+D$.
\end{lemma}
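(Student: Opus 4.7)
The plan is to combine a \emph{one-step extension} (using $d^2=0$ for $d\in D$) with a Zorn-style argument on pairs of the form (ideal, nonzero $\Tor$ witness). For the one-step: given any ideal $K\supseteq I$ with $\wgd_R(R/K)=n$ and any $d\in D$, I consider the short exact sequence
\[0\to R/(K:d)\xrightarrow{\,\cdot d\,}R/K\to R/(K+Rd)\to 0.\]
Applying $\Tor_*^R(-,M)$ and using $\Tor_{n+1}^R(R/(K+Rd),M)=0$ (from $\wgd R=n$), the long exact sequence forces $\wgd_R(R/(K:d))=n$ or $\wgd_R(R/(K+Rd))=n$. Since $d^2=0\in K$ implies $K+Rd\subseteq(K:d)$, either choice yields an ideal $K'\supseteq K+Rd$ with $\wgd_R(R/K')=n$.

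To aggregate this across all of $D$, I would fix an $R$-module $M_0$ and a nonzero class $\xi_0\in\Tor_n^R(R/I,M_0)$ (existing by $\wgd_R(R/I)=n$), and consider the poset $\mathcal{P}$ of pairs $(K,\xi)$ with $K\supseteq I$ an ideal and $\xi\in\Tor_n^R(R/K,M_0)\setminus\{0\}$, ordered by $(K,\xi)\leq(K',\xi')$ iff $K\subseteq K'$ and the quotient-induced map $\Tor_n^R(R/K,M_0)\to\Tor_n^R(R/K',M_0)$ sends $\xi\mapsto\xi'$. Chains have upper bounds: for $\{(K_\alpha,\xi_\alpha)\}$, set $K=\bigcup K_\alpha$; then $\Tor_n^R(R/K,M_0)=\varinjlim\Tor_n^R(R/K_\alpha,M_0)$, and the image of any $\xi_{\alpha_0}$ in this colimit is nonzero because along the chain $\xi_{\alpha_0}\mapsto\xi_\alpha\neq 0$ for every $\alpha\geq\alpha_0$. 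Zorn's lemma then furnishes a maximal element $(J,\xi)$.

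I would conclude by showing $J\supseteq I+D$. If some $d\in D\setminus J$, applying the one-step sequence at $(J,d)$ and $M_0$ gives one of two cases: either the quotient-induced map sends $\xi$ to a nonzero element of $\Tor_n^R(R/(J+Rd),M_0)$, in which case $(J+Rd,\pi_*(\xi))$ strictly exceeds $(J,\xi)$ and contradicts maximality; or the map sends $\xi$ to $0$, in which case exactness yields $\xi=\iota_*(\zeta)$ for some nonzero $\zeta\in\Tor_n^R(R/(J:d),M_0)$. \emph{This second case is the main obstacle}: the pair $((J:d),\zeta)$ is \emph{not} above $(J,\xi)$ in the chosen order, because the composite $R/(J:d)\xrightarrow{\cdot d}R/J\twoheadrightarrow R/(J:d)$ vanishes (as $d\in(J:d)$), so the quotient-induced $\Tor$-map sends $\xi$ to $0$ rather than to $\zeta$. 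Overcoming this will require either enlarging $\mathcal{P}$ so that its order admits inclusion-induced lifts alongside quotient-induced descents, or performing an explicit transfinite recursion over a well-ordering of $D$ that resets the $\Tor$ witness to $\zeta$ whenever the bad case arises, together with the observation that the strictly ascending chain of ideals so produced is bounded by the number of ideals of $R$ and must therefore terminate at an ideal $J\supseteq I+D$ with $\wgd_R(R/J)=n$.
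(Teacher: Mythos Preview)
Your one-step short exact sequence and the dichotomy it produces are exactly right, but you have overlooked a strengthening that dissolves your ``main obstacle'' entirely. You only record $K+Rd\subseteq (K:d)$; the paper's key observation is that in fact
\[
(K:d)\;\supseteq\; K+(0:d)\;\supseteq\; K+D\;\supseteq\; I+D,
\]
because $d\in D$ and $D^2=0$ force $D\subseteq(0:d)$. Consequently, whenever the ``bad case'' occurs---i.e.\ $\xi$ dies in $\Tor_n^R(R/(K+Rd),M_0)$ and therefore lifts to a nonzero $\zeta\in\Tor_n^R(R/(K:d),M_0)$---you are \emph{already finished}: take $J=(K:d)$. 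There is no need to continue, to reinsert $((K:d),\zeta)$ into a poset, or to run any transfinite recursion.

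With this in hand, the paper's argument is short and avoids Zorn altogether. Fix $M$ with $\Tor_n(R/I,M)\neq 0$ and suppose for contradiction that no $J\supseteq I+D$ has $\wgd_R(R/J)=n$. Then for every $d\in D$ and every $K\supseteq I$ with $\Tor_n(R/K,M)\neq 0$, the bad case is forbidden, so the quotient map induces an injection $\Tor_n(R/K,M)\hookrightarrow\Tor_n(R/(K+Rd),M)$. Iterating over a finite set $\{x_1,\dots,x_m\}\subseteq D$ and then passing to the direct limit over all such finite sets gives $\Tor_n(R/I,M)\hookrightarrow\Tor_n(R/(I+D),M)$, whence $J=I+D$ works---contradiction. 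Your Zorn/transfinite proposal, by contrast, runs into genuine trouble at limit stages precisely because the witnesses in the ``bad'' steps are preimages along $\cdot d$ rather than images along quotients, and these do not assemble into a coherent element of a colimit; the fix is not more machinery but the stronger inclusion above.
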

\begin{proof} Let $M$ be an $R$-module such that $\Tor_n (R/I, M)\neq 0$. Suppose
the lemma is not true. Then we prove that the natural projection $R/I \to
R/(I+x_1R+\cdots +x_mR)$ induces an inclusion
\begin{equation}\label{tor1}
 \Tor_n (R/I, M)\hookrightarrow \Tor_n (R/(I+x_1R+\cdots +x_mR), M)
\end{equation}
 for any finite subset $\{x_1, \dots, x_m\}\subset D$. Set $I_0=I$ and define $I_p$
inductively as $I_p=I_{p-1}+x_p R$ for all $1\leq p\leq m$.
 We have the following short exact sequence
\[
 0\to (x_pR+I_{p-1})/I_{p-1} \to R/I_{p-1} \to R/I_p \to 0
\]
 for all $1\leq p\leq m$. The homomorphism $f: R \to (x_pR+I_{p-1})/I_{p-1}$ defined
by $f(r)=rx_p+I_{p-1}$ for all $r\in R$ induces an isomorphism $R/\Ker f \cong (x_p
R+I_{p-1})/I_{p-1}$. Furthermore we have that $\Ker f\supset I_{p-1}+ (0:x_p)
\supset I+D$. If $\Tor_n ((x_pR+I_{p-1})/I_{p-1}, M)\neq 0$, then the lemma is true
with $J=\Ker f$. So assume that $\Tor_n ((x_pR+I_{p-1})/I_{p-1}, M)= 0$. In this
case the natural projection $R/I_{p-1} \to R/I_p$ induces an inclusion
$\Tor_n (R/I_{p-1}, M)\hookrightarrow \Tor_n (R/I_p, M)$ for all $1\leq p\leq m$,
proving (\ref{tor1}).

 Now let $\mathcal{X}$ denote the following class of ideals: $J\in \mathcal{X}$ iff
$J\subset D$ and $J$ is finitely generated. Then
\begin{equation}\label{directlim}
\varinjlim_{J\in \mathcal{X}} \Tor_n(R/(I+J),M)=\Tor_n(R/(I+D),M)
\end{equation}

Using (\ref{tor1}) and (\ref{directlim}), we obtain an inclusion $\Tor_n (R/I,
M)\hookrightarrow \Tor_n (R/(I+D), M)$. Thus $\Tor_n (R/(I+D), M)\neq 0$ and the
lemma is proved.
\end{proof}

The next lemma is an immediate consequence of the long exact sequence of $\Tor$
groups applied to the given short exact sequence. We note it here for the readers
convenience.

\begin{lemma}\label{lemma3}
Let $R$ be a commutative (not necessarily local Gaussian) ring. Let $M_1$ and $M_2$
be $R$-modules and $f:M_1\to M_2$
be an injective homomorphism. If the $\wgd(R)=n$, then $f_*:\Tor_n(M_1, -) \to
\Tor_n(M_2, -)$ is also injective.
\end{lemma}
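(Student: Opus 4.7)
The plan is to recognise this as a one-line consequence of the long exact sequence of $\Tor$, so there is very little to do beyond setting things up properly. Given the injective homomorphism $f\colon M_1\to M_2$, I form the short exact sequence
\[
0 \to M_1 \xrightarrow{f} M_2 \to M_2/f(M_1) \to 0,
\]
and, for an arbitrary $R$-module $N$, I extract from the induced long exact sequence of $\Tor$ the segment
\[
\Tor_{n+1}(M_2/f(M_1), N) \to \Tor_n(M_1, N) \xrightarrow{f_*} \Tor_n(M_2, N).
\]

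The point is then that, since $\wgd(R)=n$, the functor $\Tor_{n+1}(-,N)$ is identically zero (this is the characterisation of weak global dimension recalled after Definition \ref{weak}). In particular $\Tor_{n+1}(M_2/f(M_1), N)=0$, so exactness at the middle term forces $f_*$ to be injective on $\Tor_n(-,N)$.

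Finally, the conclusion is asserted at the level of the natural transformation $f_*\colon \Tor_n(M_1,-)\to \Tor_n(M_2,-)$, so I note that the argument above was carried out for a completely arbitrary $N$, and that the connecting maps in the long exact sequence are natural in $N$; hence injectivity of $f_*$ on each individual $N$ upgrades to injectivity of the natural transformation. There is no real obstacle: the only thing to be careful about is invoking the correct vanishing statement at degree $n+1$ (rather than $n$) and writing the Tor long exact sequence with $M_2/f(M_1)$ in the correct slot so that the relevant vanishing term appears immediately to the left of $\Tor_n(M_1,N)$.
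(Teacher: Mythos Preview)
Your argument is correct and is essentially identical to the paper's own proof: both form the short exact sequence $0\to M_1\to M_2\to M_2/f(M_1)\to 0$, apply the long exact sequence of $\Tor$, and use that $\Tor_{n+1}(M_2/f(M_1),-)=0$ because $\wgd(R)=n$.
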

\begin{proof} The proof is a direct consequence of the long exact sequence of $\Tor$
groups applied to the given short exact sequence and the fact that
$\Tor_{n+1} (M_2/M_1, -)=0$.
\end{proof}

\begin{lemma}\label{lemma4}
Let $(R,\mathfrak{m})$ be a local Gaussian ring. If the $\wgd(R)=n$, \newline
then $\Tor_n (R/D, - )=0$ for all $n\geq 1$.
\end{lemma}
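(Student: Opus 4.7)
The plan is to argue by cases on $n$. For $n=1$, the result is immediate from Glaz's theorem cited in the introduction: $\wgd(R)\leq 1$ forces $R$ to be reduced, so $D\subseteq\mathcal{N}=0$, and $\Tor_1(R,-)=0$ holds trivially.

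For $n\geq 2$, I would start from the short exact sequence
\[
0\to D\to R\to R/D\to 0.
\]
Because $R$ is flat, its associated $\Tor$-long exact sequence gives $\Tor_n(R/D,N)\cong \Tor_{n-1}(D,N)$ for every $R$-module $N$, so the claim reduces to showing $fd_R(D)\leq n-2$. The key input is Lemma \ref{lemma3}: for every $a\in R$ the inclusion $Ra\cong R/(0:a)\hookrightarrow R$ forces $\Tor_n(R/(0:a),-)=0$. I would specialise to $a\in \mathfrak{m}\setminus D$, which exists because otherwise $\mathfrak{m}=D$, forcing $\mathfrak{m}^2=0$ and hence $\wgd(R)=\infty$ by Proposition \ref{prop2.1}, contradicting $n<\infty$. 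By Theorem \ref{thm2.3}(iii) one has $(0:a)\subseteq D$, so the short exact sequence $0\to D/(0:a)\to R/(0:a)\to R/D\to 0$ is well defined; its $\Tor$-long exact sequence, combined with $\Tor_n(R/(0:a),-)=0$ and $\Tor_{n+1}(R/D,-)=0$ (from $\wgd(R)=n$), produces the injection
\[
\Tor_n(R/D,-)\hookrightarrow \Tor_{n-1}(D/(0:a),-)\cong \Tor_{n-1}(aD,-),
\]
the last identification being by multiplication by $a$.

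The main obstacle is then to show that $\Tor_{n-1}(aD,-)=0$. The plan is to iterate the above reasoning: $aD\subseteq D$ is itself an ideal of $R$ with $(aD)^2=0$, and the short exact sequence $0\to aD\to R\to R/aD\to 0$ together with flatness of $R$ yields $\Tor_{n-1}(aD,-)\cong \Tor_n(R/aD,-)$ for $n\geq 2$, so the problem recurs in the original form with $aD$ in place of $D$. The subtlety is that repeating the argument verbatim requires a new element $a'\in \mathfrak{m}$ with $(0:a')\subseteq aD$, which Theorem \ref{thm2.3}(iii) does not provide; instead, one must invoke Lemma \ref{lemma2} to extract, whenever the vanishing fails, an ideal $J\supseteq D$ with $\wgd(R/J)=n$, and then combine the resulting descending chain $D\supseteq aD\supseteq \cdots$ with the Gaussian relations in Theorem \ref{thm2.2}(iv) and the comparability assertions in Theorem \ref{thm2.3} to force the descent either to terminate at $0$ or to contradict $\Tor_n(R/D,N)\neq 0$. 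Making this rigorous in the absence of a noetherian hypothesis is the technical crux.
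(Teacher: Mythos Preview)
Your reduction to the injection $\Tor_n(R/D,-)\hookrightarrow \Tor_n(R/aD,-)$ for any $a\in\mathfrak m\setminus D$ is correct, and it is exactly the map the paper exploits. The gap is in the next step: you try to prove the stronger statement $\Tor_n(R/aD,-)=0$, and this is where the argument stalls. Iterating gives a descending chain $D\supseteq aD\supseteq a'aD\supseteq\cdots$, but nothing in Theorems \ref{thm2.2} or \ref{thm2.3} produces an $a'$ with $(0:a')\subseteq aD$, and the appeal to Lemma \ref{lemma2} does not help: that lemma moves \emph{up} from $I$ to $I+D$, not down inside $D$. Without a noetherian hypothesis there is no reason for the chain to stabilise, and you have no mechanism to derive a contradiction from its failure to do so. So the plan, as written, does not close.

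The paper's proof avoids this regress by refusing to prove $\Tor_n(R/aD,-)=0$ globally. Instead it fixes a single nonzero class $\overline{w}\in\Tor_n(R/D,M)$, lifts it to an element $w$ of a free resolution of $M$, and observes that $\partial_n(w)$ has only finitely many nonzero coordinates $\lambda_1,\dots,\lambda_m\in D$. If some $a\in\mathfrak m\setminus D$ kills all the $\lambda_j$, then $a$ is chosen \emph{depending on $\overline{w}$}, and one checks directly that the image of $\overline{w}$ under your injection $\Tor_n(R/D,M)\hookrightarrow\Tor_n(R/aD,M)$ is represented by $aw$, which is now an honest cycle in the original resolution and hence a boundary; so $\overline{w}=0$, a contradiction. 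If no such $a$ exists, then $1\mapsto(\lambda_1,\dots,\lambda_m)$ embeds $R/D$ into the free module $R^m$, and Lemma \ref{lemma3} forces $\Tor_n(R/D,M)=0$ directly. The point you are missing is precisely this element-by-element choice of $a$: it replaces an unattainable global vanishing by a local computation on a single class.
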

\begin{proof} If the lemma is not true, then there exists a module $M$ such that
$\Tor_n (R/D, M)\neq 0$. Consider a free resolution
of $M$: $\cdots \xrightarrow{\partial_{n+2}} R^{X_{n+1}}\xrightarrow{\partial_{n+1}}
R^{X_n}\xrightarrow{\partial_{n}}
\cdots \xrightarrow{\partial_1} R^{X_0}\xrightarrow{\partial_0} M$, where $X_i$ are
sets. By assumption
$\Tor_n (R/D, M)=\Ker(\overline{\partial_n})/ \Img(\overline{\partial_{n+1}})\neq
0$, where $\overline{\partial_i}$ is the natural homomorphism
$\overline{\partial_i}:(R/D)^{X_i}\to (R/D)^{X_{i-1}}$ obtained after tensoring the
above resolution by $R/D$ for all $i\in \mathbb{N}$. Since $\Tor_n (R/D, M)\neq 0$,
there exists a $\overline{w}\in \Ker(\overline{\partial_n})$ such that
$\overline{w}\notin \Img(\overline{\partial_{n+1}})$. Let $w$ be the representative
of $\overline{w}$ in $R^{X_n}$. Hence $\partial_n(w)\in D^{X_{n-1}}$. Let
$\lambda_1, \dots,\lambda_m\in D$ be the finitely many non-zero entries of
$\partial_n(w)$. Now we consider two cases.

{\bf Case 1}. There exists an $a\in \mathfrak{m}\setminus D$ such that
$a\lambda_j=0$ for all $1\leq j\leq m$.\\
Define a homomorphism $f:R/D \to R/a D$ which is multiplication by $a$. Using
Theorem \ref{thm2.3}$(iii)$, it follows that $(0:a)\subset D$. This gives the
injectivity of $f$. Therefore by Lemma \ref{lemma3}, $f_*:\Tor_n (R/D, M) \to \Tor_n
(R/(aD), M)$ is injective and hence $f_*(\overline{w})\neq 0$. It is easy to verify
that $aw$ is a representative of $f_*(\overline{w})$ in $R^{X_n}$.
Since $a\in (0:\lambda_j)$ for all $1\leq j\leq m$, we obtain that
$\partial_n(aw)=a\partial_n(w)=0$. This would imply that $f_*(\overline{w})=0$, a
contradiction.

{\bf Case 2}. For all $a\in R\setminus D$ at least one $a\lambda_j\neq 0$.\\
We have an injective homomorphism $g: R/D\to R^m$ defined by $g(1)=(\lambda_1, \dots
, \lambda_m)$. By Lemma \ref{lemma3}, the induced homomorphism
$g_*:\Tor_n (R/D, M) \to \Tor_n (R^m,M)$ is injective. This is a contradiction as
$\Tor_n (R^m,M)=0$.
\end{proof}

\begin{lemma}\label{singleann}
Let $(R,\mathfrak{m})$ be a local Gaussian ring such that each element of
$\mathfrak{m}$ is a zero divisor. If $\lambda_1, \dots, \lambda_n\in \mathfrak{m}$,
then there exists a non-trivial element $a\in \mathfrak{m}$ such that $a\lambda_j=0$
for all $1\leq j\leq n$.
\end{lemma}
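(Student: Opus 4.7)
The plan is to avoid induction entirely and apply the Gaussian property directly to the finitely generated ideal $I=(\lambda_1,\ldots,\lambda_n)$. By Theorem \ref{thm2.2}(v), there exists an index $k$ with $I^2=(\lambda_k^2)$, so every product $\lambda_i\lambda_j$ can be written as $c_{ij}\lambda_k^2$ for some $c_{ij}\in R$. The hypothesis that every element of $\mathfrak{m}$ is a zero divisor will supply, in the main case $\lambda_k^2\neq 0$, a non-zero annihilator of the single element $\lambda_k$, and the identity $I^2=(\lambda_k^2)$ is exactly what is needed to promote this into a common annihilator of all the $\lambda_i$.

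Concretely, I would first dispose of the degenerate case $\lambda_k^2=0$: then $I^2=0$, so $\lambda_i\lambda_j=0$ for all $i,j$; if some $\lambda_j$ is non-zero take $a:=\lambda_j$, otherwise all $\lambda_j$ vanish and any non-zero element of $\mathfrak{m}$ works (so one may tacitly assume $R$ is not a field). In the principal case $\lambda_k^2\neq 0$, I would pick $b\neq 0$ with $b\lambda_k=0$, observe that $b$ must lie in $\mathfrak{m}$ (otherwise $b$ would be a unit, forcing $\lambda_k=0$), and then compute $b\lambda_i\lambda_j=c_{ij}(b\lambda_k)\lambda_k=0$ for all $i,j$ using the Gaussian identity. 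Finally I would take $a:=b$ if $b\lambda_j=0$ for every $j$, and $a:=b\lambda_j\in\mathfrak{m}$ for any $j$ with $b\lambda_j\neq 0$ otherwise; in either subcase $a$ is a non-zero element of $\mathfrak{m}$ annihilating every $\lambda_i$.

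The only mildly delicate point, and really the heart of the argument, is that the naive candidate $b$ annihilates just $\lambda_k$, not all of $\lambda_1,\ldots,\lambda_n$; the Gaussian identity $I^2=(\lambda_k^2)$ is precisely what allows one to pass from $b$ to $b\lambda_j$ and recover a simultaneous annihilator. Beyond this single observation there is no real obstacle, and in particular no induction on $n$ (with its attendant bookkeeping of annihilators at each step) is needed.
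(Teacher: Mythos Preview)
Your argument is correct and is in fact more economical than the paper's. The paper splits into two cases according to whether all $\lambda_i$ lie in the ideal $D=\{x:x^2=0\}$ or not: when they do, any non-zero $d\in D$ works because $D^2=0$; when some $\lambda_j\notin D$, the paper invokes Theorem~\ref{thm2.2}(ii) to write each $\lambda_i=r_i\lambda+d_i$ with $d_i\in I\cap(0:I)\subseteq D$ and $\lambda$ a generator of $I/(I\cap(0:I))$, and then takes $a$ to be a non-zero annihilator of $\lambda$, using Theorem~\ref{thm2.3}(iii) to see that $a\in D$ and hence $ad_i=0$. Your route bypasses $D$ entirely: you use only Theorem~\ref{thm2.2}(v) to get $I^2=(\lambda_k^2)$, and the single identity $b\lambda_i\lambda_j=c_{ij}(b\lambda_k)\lambda_k=0$ does all the work. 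What you gain is that no structural facts about $D$ (that it is an ideal, that $D^2=0$, that annihilators of elements outside $D$ lie in $D$) are needed; what the paper's approach gains is an explicit description of the common annihilator as an element of $D$, which is occasionally useful downstream but is not required for the lemma as stated.
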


\begin{proof}
We divide the proof into two cases.

{\bf Case 1}: $\lambda_1,\dots,\lambda_n\in D$.

First we claim that $D\neq 0$. Towards that end, let $0\neq a,b\in \mathfrak{m}$
with $ab=0$. By Theorem \ref{thm2.2}$(iv)$, we have either $a^2=0$ or $b^2=0$. Thus
$D\neq 0$. So take any $d\in D\setminus 0$. Using Theorem \ref{thm2.3}$(i)$, we
obtain that $d\lambda_i=0$ for all $1\leq i\leq n$.

{\bf Case 2}:  There exist $j\in \{1,2,\ldots,n\}$ such that $\lambda_j\notin D$.

By Theorem \ref{thm2.3}$(iii)$, it follows that $(0:\lambda_j)\subseteq D$. Set
$I=(\lambda_1,\ldots, \lambda_n)$. So $(0:I)\subset (0:\lambda_j)\subseteq D$. Using
Theorem \ref{thm2.2}$(ii)$, we obtain that $I/{I\cap (0:I)}$ is a cyclic $R$-module,
say its generator is $\lambda$. Hence we can write $\lambda_i=r_i\lambda+d_i$ for
all $1\leq i\leq n$, where $d_i\in I\cap (0:I)$ and $r_i\in R$. Observe that
$\lambda\in \mathfrak{m}\setminus D$. Choose any $d\in (0:\lambda)\setminus 0$.
Using Theorem \ref{thm2.3}$(iii)$, it follows that $d\in D$. Multiplying the
equation expressing $\lambda_i$ in terms of $\lambda$ with $d$, we obtain
$d\lambda_i=dr_i\lambda + dd_i$ for all $1\leq i\leq n$. Using  Theorem
\ref{thm2.3}$(i)$, we obtain that $dd_i=0$. Thus $d\lambda_i=0$ for all $1\leq i\leq
n$.

\end{proof}

\begin{lemma}\label{lemma5}
Let $(R,\mathfrak{m})$ be a local Gaussian ring with $\wgd(R)=n$. If each element of
$\mathfrak{m}$ is a zero divisor, then
$\Tor_n (R/\mathfrak{m}, - )=0$ for all $n\geq 1$.
\end{lemma}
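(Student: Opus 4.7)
The plan is to follow essentially the same template as the proof of Lemma~\ref{lemma4}, with Lemma~\ref{singleann} playing the role that Theorem~\ref{thm2.3}$(iii)$ played there. I would argue by contradiction: suppose there is an $R$-module $M$ with $\Tor_n(R/\mathfrak{m},M)\neq 0$. Pick a free resolution
\[
\cdots\xrightarrow{\partial_{n+2}} R^{X_{n+1}}\xrightarrow{\partial_{n+1}} R^{X_n}\xrightarrow{\partial_{n}}\cdots\xrightarrow{\partial_1} R^{X_0}\xrightarrow{\partial_0} M
\]
of $M$, and choose $\overline{w}\in\Ker(\overline{\partial_n})\setminus\Img(\overline{\partial_{n+1}})$ in the complex obtained by tensoring with $R/\mathfrak{m}$. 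Lift $\overline{w}$ to $w\in R^{X_n}$, so that $\partial_n(w)\in \mathfrak{m}^{X_{n-1}}$, and let $\lambda_1,\dots,\lambda_m\in\mathfrak{m}$ denote its finitely many nonzero entries.

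Since every element of $\mathfrak{m}$ is a zero divisor, Lemma~\ref{singleann} produces a nonzero $a\in\mathfrak{m}$ with $a\lambda_j=0$ for all $j$. I would then define the multiplication-by-$a$ map $f:R/\mathfrak{m}\to R/(a\mathfrak{m})$, $f(r+\mathfrak{m})=ar+a\mathfrak{m}$, and check injectivity directly from locality: if $ar=as$ with $s\in\mathfrak{m}$ and $r\notin\mathfrak{m}$, then $r-s$ is a unit of $R$ and $a(r-s)=0$ forces $a=0$, contrary to the choice of $a$. By Lemma~\ref{lemma3}, the induced map $f_*:\Tor_n(R/\mathfrak{m},M)\to\Tor_n(R/(a\mathfrak{m}),M)$ is injective, so $f_*(\overline{w})\neq 0$.

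The contradiction arises by producing an explicit representative of $f_*(\overline{w})$, namely $aw\in R^{X_n}$. Because $a\lambda_j=0$ for every $j$, one has $\partial_n(aw)=a\,\partial_n(w)=0$ in $R^{X_{n-1}}$, so $aw$ is a genuine cycle in the free resolution. But $F_\bullet\to M$ is a resolution, so $H_n(F_\bullet)=0$ for $n\geq 1$ and hence $aw\in\Img(\partial_{n+1})$. Consequently $aw$ represents the zero class in $\Tor_n(R/(a\mathfrak{m}),M)$, i.e.\ $f_*(\overline{w})=0$, contradicting injectivity of $f_*$.

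The only potential obstacle is the injectivity of $f$, which in Lemma~\ref{lemma4} required the subtle containment $(0:a)\subseteq D$; here the cruder fact that $1-s$ is a unit for $s\in\mathfrak{m}$ does the job, so all the delicate structural content of the argument is absorbed into Lemma~\ref{singleann}. Once that lemma is invoked, the rest of the proof is just the standard ``lift, multiply, use acyclicity of $F_\bullet$'' manoeuvre from the proof of Lemma~\ref{lemma4}.
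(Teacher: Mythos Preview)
Your proposal is correct and is precisely the argument the paper has in mind: the paper's own proof simply says to substitute $\mathfrak{m}$ for $D$ in the proof of Lemma~\ref{lemma4} and to invoke Lemma~\ref{singleann} so that one is automatically in Case~1, which is exactly what you have spelled out in detail. Your observation that the injectivity of $f:R/\mathfrak{m}\to R/(a\mathfrak{m})$ requires only $a\neq 0$ (rather than the finer inclusion $(0:a)\subseteq D$ used in Lemma~\ref{lemma4}) is the right way to see why the substitution goes through without needing $a\notin D$.
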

\begin{proof} The proof of this Lemma follows by substituting $\mathfrak{m}$ for $D$
in Lemma \ref{lemma4}. As a result of Lemma \ref{singleann}, the proof of lemma
\ref{lemma5} falls under Case 1 of Lemma \ref{lemma4}.
\end{proof}

\begin{prop}\label{prop1.1}
Let $(R,\mathfrak{m})$ be a local Gaussian ring. If $\mathfrak{m}\neq 0$ and each
element of $\mathfrak{m}$ is a zero divisor, then
$\wgd (R)\geq 3$.
\end{prop}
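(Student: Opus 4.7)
The plan is to argue by contradiction, assuming $\wgd(R)=n$ with $n\in\{0,1,2\}$ and deriving a contradiction in each case.

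The cases $n=0$ and $n=1$ are standard. If $\wgd(R)=0$ then $R$ is von Neumann regular; locality forces $R$ to be a field (given $x\in\mathfrak{m}$, pick $y$ with $x=xyx$; then $(1-yx)x=0$ with $1-yx$ a unit yields $x=0$), contradicting $\mathfrak{m}\neq 0$. If $\wgd(R)=1$ then by the Glaz characterization recalled in the introduction, $R$ is reduced, and Theorem~\ref{thm2.1}(iv) then forces $R$ to be a domain, in which no element of $\mathfrak{m}\neq 0$ can be a zero divisor.

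The serious case is $n=2$. Applying Lemma~\ref{lemma5} gives $\Tor_2(R/\mathfrak{m},-)=0$; the long exact sequence of $0\to\mathfrak{m}\to R\to R/\mathfrak{m}\to 0$ collapses this to $\Tor_1(\mathfrak{m},-)=0$, so $\mathfrak{m}$ is a flat ideal. An entirely analogous argument using Lemma~\ref{lemma4} shows that $D$ is also flat. The equational criterion for flatness applied to $\mathfrak{m}$ together with the zero-divisor hypothesis then yields $\mathfrak{m}=\mathfrak{m}^2$: given $0\neq x\in\mathfrak{m}$ and $0\neq a\in R$ with $ax=0$, locality forces $a\in\mathfrak{m}$; the criterion writes $x=\sum b_j y_j$ with $y_j\in\mathfrak{m}$ and $ab_j=0$, and since $a\neq 0$ each $b_j$ must lie in $\mathfrak{m}$, giving $x\in\mathfrak{m}^2$.

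The main obstacle is converting these structural constraints into a contradiction, since Nakayama's lemma is unavailable ($\mathfrak{m}$ need not be finitely generated). My plan is to split on $D$. If $D=0$, a direct calculation shows $R$ is reduced: any nonzero nilpotent $x$ with minimal $x^k=0$ would satisfy $(x^{k-1})^2=x^{2k-2}=0$ (as $2k-2\geq k$), putting $x^{k-1}\in D=0$ and contradicting minimality, so the $n=1$ contradiction applies. If $\mathfrak{m}=D$, then $\mathfrak{m}^2=D^2=0$ is nilpotent and Proposition~\ref{prop2.1} gives $\wgd(R)=\infty$, contradicting $\wgd(R)=2$. In the remaining subcase $0\neq D\subsetneq\mathfrak{m}$, the quotient $R/D$ is a nonzero arithmetical local ring (Theorem~\ref{thm2.3}(i)) whose maximal ideal $\bar{\mathfrak{m}}=\mathfrak{m}/D$ inherits flatness and satisfies $\bar{\mathfrak{m}}=\bar{\mathfrak{m}}^2$. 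When $R/D$ is moreover a valuation domain, $\bar{\mathfrak{m}}$ flat forces $\bar{\mathfrak{m}}=(\bar\pi)$ principal, and $\bar\pi\in(\bar\pi^2)$ gives $\bar\pi(1-\bar r\bar\pi)=0$, so $\bar\pi$ must be a unit in the domain $R/D$, contradicting $\bar\pi\in\bar{\mathfrak{m}}$. The case where $R/D$ is non-reduced arithmetical remains the principal technical difficulty; here I would exploit the Gaussian identities of Theorem~\ref{thm2.2}(iv) combined with the zero-divisor hypothesis to obtain the final contradiction.
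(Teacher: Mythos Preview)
Your argument has a genuine gap in the crucial subcase $0\neq D\subsetneq\mathfrak{m}$, and the paper's route is much more direct.

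First, the specific error: your claim that ``$\bar{\mathfrak{m}}$ flat forces $\bar{\mathfrak{m}}=(\bar\pi)$ principal'' in a valuation domain is false. In \emph{any} valuation domain every ideal is torsion-free and hence flat; a rank-one nondiscrete valuation domain has a flat, non-finitely-generated maximal ideal. So flatness of $\bar{\mathfrak{m}}$ carries no information here, and the contradiction you want does not follow. (There is also a hidden step: you assert $\bar{\mathfrak{m}}$ ``inherits flatness'', but flatness of $\mathfrak{m}$ and $D$ over $R$ does not obviously give flatness of $\mathfrak{m}/D$ over $R/D$.) Second, you explicitly leave the non-reduced $R/D$ subcase unfinished, and that subcase genuinely occurs: for $R=k[x]/(x^4)$ one has $D=(x^2)$ and $R/D\cong k[x]/(x^2)$. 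So the proof is incomplete precisely where the content lies.

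The paper bypasses all of this. After disposing of $\mathfrak{m}=D$ via Proposition~\ref{prop2.1}, it picks any $x\in\mathfrak{m}\setminus D$ and shows directly that $(0:x)$ is not flat: since $(0:x)\subseteq D$ by Theorem~\ref{thm2.3}(iii), the map $xR\otimes(0:x)\to(0:x)$ sending $xr\otimes b\mapsto rb$ is a well-defined isomorphism (well-definedness uses $D^2=0$), so $xR\otimes(0:x)\cong(0:x)\neq 0$, while $xR\cdot(0:x)=0$. Hence the second syzygy of $R/xR$ is not flat and $\wgd(R)\geq 3$. This avoids Lemmas~\ref{lemma4} and~\ref{lemma5}, the equational criterion, and any case analysis on $R/D$.
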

\begin{proof} If $\mathfrak{m} =D$, then Proposition \ref{prop2.1} implies that
$\wgd (R) = \infty$. If $\mathfrak{m}\neq D$, then take any $x\in
\mathfrak{m}\setminus D$ and consider
the following resolution of $R/xR$:
\[
0\to (0:x)\to R\xrightarrow{\sigma}R\xrightarrow{\tau}R/xR ,
\]
where $\tau$ is the natural projection and $\sigma(r)=xr$ for all $r\in R$. If $\wgd
(R)<3$, then $(0:x)$ must be flat. Thus
it suffices to show that $(0:x)$ is not flat. We will use the fact that if $M$ is a
flat $R$ module then $I\otimes M=IM$ for all ideals $I\subset R$. Set $I=xR$ and
$M=(0:x)$ and observe that $IM=0$. Hence it suffices to show that $I\otimes
(0:x)\neq 0$. Since $x\notin D$, Theorem \ref{thm2.3}$(iii)$ implies that
$(0:x)\subset D$. Define a homomorphism $\theta : I\otimes (0:x)\to (0:x)$ as
follows: if $a\in I$ and $b\in (0:x)$, then set $\theta(a\otimes b)=rb$, where $r\in
R$
is such that $a=xr$. If there is another $r'\in R$ such that $a=xr'$, then
$(r-r')\in (0:x)$ which implies that $(r-r')b=0$.
Taking into account the last remark, it is easy to check that $\theta$ is well
defined. Moreover the homomorphism
$\theta':(0:x)\to I\otimes (0:x)$ defined by $\theta'(c)=x\otimes c$ for all $c\in
(0:x)$ is an inverse of $\theta$. Hence we have an isomorphism
$\theta:I\otimes (0:x)\cong (0:x)$ which shows that $I\otimes (0:x)\neq 0$, proving
that $(0:x)$ is not flat.
\end{proof}

\section{Local Gaussian rings with nilradical being the maximal ideal}

Let $R$ be a local Gaussian ring which admits the following property:
\begin{Property}\label{hypo1}
For all $x\in D\setminus 0$, $(0:x)$ is not cyclic modulo $D$. In other words there
is no $a\in R\setminus D$ such that $(0:x)=aR+D$.
\end{Property}

\begin{lemma}\label{flat} Let $(R,\mathfrak{m})$ be a local Gaussian ring such that
each element of $\mathfrak{m}$ is a zero divisor. If $R$ satisfies Property
(\ref{hypo1}) and $\mathfrak{m}\neq D$ , then

\begin{itemize}
\item[(i)] $\mathfrak{m}=\mathfrak{m}^2+D$;
\item[(ii)] for any finitely generated ideal $J\subset \mathfrak{m}$ there exist
$x\in \mathfrak{m}$ such that $J^2\subset x^2R$ and $x^2\notin D$;
\item[(iii)] $\mathfrak{m}^2$ is flat.
\end{itemize}
\end{lemma}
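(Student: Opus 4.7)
For (i), my plan is to take any $a\in\mathfrak{m}\setminus D$ (the case $a\in D$ being trivial) and produce a decomposition $a=cr+d$ with $cr\in\mathfrak{m}^2$ and $d\in D$. Since each element of $\mathfrak{m}$ is a zero divisor and Theorem \ref{thm2.3}(iii) forces $(0:a)\subseteq D$, I pick $b\in(0:a)\setminus\{0\}\subseteq D$. Property (\ref{hypo1}) forbids $(0:b)=cR+D$ for any $c\notin D$, while $a\in(0:b)\setminus D$ already yields $aR+D\subseteq(0:b)$; so this inclusion is strict, and I can choose $c\in(0:b)\setminus(aR+D)$, necessarily with $c\notin D$. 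Theorem \ref{thm2.3}(i) makes $R/D$ arithmetical (so its ideals are linearly ordered), so $\bar c\notin\bar a\bar R$ forces $\bar a\in\bar c\bar R$, lifting to $a=cr+d$ with $r\in R$, $d\in D$. If $r$ were a unit, then $c\in aR+D$ against the choice of $c$; hence $r\in\mathfrak{m}$ and $cr\in\mathfrak{m}^2$.

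For (ii), I first observe that $S:=\{y\in\mathfrak{m}:y^2\notin D\}$ is non-empty: otherwise Theorem \ref{thm2.2}(iv)(a) forces $uv\in u^2R+v^2R\subseteq D$ for all $u,v\in\mathfrak{m}$, so $\mathfrak{m}^2\subseteq D$, and (i) yields $\mathfrak{m}=D$, contradicting the hypothesis. Now, given $J=(a_1,\ldots,a_n)\subseteq\mathfrak{m}$, Theorem \ref{thm2.2}(v) gives $J^2=a_k^2R$ for some $k$. If $a_k\in S$ take $x=a_k$. Otherwise $a_k^2\in D$, and for any $y\in S$ Theorem \ref{thm2.2}(iv)(a) forces $(a_k,y)^2=(y^2)$ (the alternative $(a_k^2)$ would put $y^2\in a_k^2R\subseteq D$, contradicting $y\in S$), so $a_k^2\in y^2R$ and $J^2\subseteq y^2R$ with $x=y$.

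For (iii), part (ii) combined with Theorem \ref{thm2.2}(iv)(a) presents $\mathfrak{m}^2=\bigcup_{x\in S}x^2R$ as a directed union of a chain of principal ideals. I verify flatness through the equational criterion: given $r\in R$ and $a\in\mathfrak{m}^2$ with $ra=0$, apply (ii) to write $a=x^2t$ for $x\in S$, so that $\delta:=rt\in(0:x^2)\subseteq D$. If $\delta=0$, then $a=t\cdot x^2\in(0:r)\mathfrak{m}^2$, done. Otherwise Property (\ref{hypo1}) forces $(0:\delta)\supsetneq x^2R+D$, yielding $u\in(0:\delta)\setminus(x^2R+D)$ with $u\in\mathfrak{m}\setminus D$; the arithmetical structure of $R/D$ supplies $x^2=uv+d$ with $v\in\mathfrak{m}$, $d\in D$. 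Substitution gives $a=(ut)v+dt$ with $ut,dt\in(0:r)$ (from $u\delta=0$ and $\delta d\in D^2=0$); decomposing $v=v'+e$ via (i) ($v'\in\mathfrak{m}^2$, $e\in D$), I obtain $a=(ut)v'+tf$ where $(ut)v'$ already lies in $(0:r)\mathfrak{m}^2$ and $tf\in\mathfrak{m}^2\cap(0:r)$ is the residue. The main obstacle is absorbing $tf$ into $(0:r)\mathfrak{m}^2$ by iterating the same refinement along the strictly ascending chain $\{x^2R:x\in S\}$ (strictness forced by Property (\ref{hypo1})) and passing to the directed colimit, with the extension to arbitrary finitely generated ideals $I\subseteq R$ proceeding along the same lines.
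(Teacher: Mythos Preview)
Your arguments for (i) and (ii) are correct and essentially identical to the paper's.

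For (iii), however, there is a genuine gap. Your ``equational'' approach reduces the problem of showing $a\in(0:r)\mathfrak{m}^2$ to showing the same thing for the residual term $tf$, and you propose to iterate. But nothing forces this iteration to terminate or to converge: at each stage you produce a new element of $\mathfrak{m}^2\cap(0:r)$ lying in $D$, and there is no well-founded descent here. Your appeal to a ``strictly ascending chain $\{x^2R:x\in S\}$'' and a directed colimit is not an argument---you would need to explain why the residuals eventually vanish, or why the colimit identifies them with zero, and neither is clear. Moreover, the single-element criterion you check (that $ra=0$ implies $a\in(0:r)\mathfrak{m}^2$) only tests injectivity of $rR\otimes\mathfrak{m}^2\to\mathfrak{m}^2$ for principal ideals, which is not enough for flatness over a ring that is not B\'ezout; the extension to general finitely generated $I$ cannot simply ``proceed along the same lines''.

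The paper avoids all of this by working directly with the map $f:I\otimes\mathfrak{m}^2\to\mathfrak{m}^2$ for an arbitrary ideal $I$. Using (ii), any $w\in I\otimes\mathfrak{m}^2$ collapses to a single term $w=z\otimes x^2$ with $z\in I$ and $x^2\notin D$. If $f(w)=zx^2=0$ and $z\neq 0$, then $z\in D$, and one splits into the two cases $zx=0$ and $zx\neq 0$. In each case Property~(\ref{hypo1}) produces an element ($y\in(0:z)$, respectively $h\in(0:zx)$) outside $xR+D$, so that $x$ can be rewritten modulo $D$ with a factor in $\mathfrak{m}$. The point is that in the tensor product one can slide factors across $\otimes$ (e.g.\ $z\otimes c^2y^2=zy^2\otimes c^2$ once $c^2\in\mathfrak{m}^2$), and after expansion every surviving term contains either a factor in $D^2=0$ on the left or a factor killed by $z$ (resp.\ $zx$). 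This gives $w=0$ in finitely many steps with no iteration, and it handles all ideals $I$ at once.
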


\begin{proof} (i): Let $a\in \mathfrak{m}\setminus D$. Since every element of
$\mathfrak{m}$ is a zero divisor, there exists $x\in D\setminus 0$ such that $ax=0$.
By Property (\ref{hypo1}), $(0:x)\neq aR+D$. So there exists some $b\in
\mathfrak{m}$ such that $b\in (0:x)$ and $b\notin aR+D$. Theorem \ref{thm2.3}$(i)$
implies that $R/D$ is a local arithmetical ring. So $a\in b R+D$ and hence $a=b r +
d$ for some $r \in R$ and $d\in D$. Moreover $b\notin aR+D$ which implies that $r$
is not a unit and hence $r\in \mathfrak{m}$. Thus $a\in \mathfrak{m}^2+D$.

(ii): First we will show that if $x^2\in D$ for all $x\in \mathfrak{m}$, then
$\mathfrak{m^2}\subset D$. Towards that end let $z\in \mathfrak{m^2}$. Such a $z$ is
of the form $z=\sum_{i=1}^n x_iy_i$, where $x_i,y_i\in m$ for all $1\leq i\leq n$.
Using Theorem \ref{thm2.2}$(iv)$, if follows that
$(x_i,y_i)^2=(x_i^2)\;\text{or}\;(y_i^2)$ for all $1\leq i\leq n$. This shows that
$x_iy_i\in D$ for all $1\leq i\leq n$. Recalling that $D$ is an ideal of $R$, it
follows that $z\in D$. Hence we have proved that $\mathfrak{m^2}\subset D$. By $(i)$
this would imply that $\mathfrak{m}=D$, a contradiction. Thus there exists an $x\in
m$ such that $x^2\notin D$. By Theorem \ref{thm2.2}$(v)$, for any finitely generated
ideal $J$ we have $J^2=y^2R$ for some $y\in J$. If $y^2\notin D$ then we are done.
If $y^2\in D$, choose any $x\in \mathfrak{m}$ with $x^2\notin D$ and observe that
$y^2\in x^2R$. Thus $J^2\subset x^2 R$.

(iii): To prove that $\mathfrak{m}^2$ is flat over $R$, we show that for any ideal
$I\subset R$, the natural homomorphism $f: I\otimes \mathfrak{m}^2 \to
\mathfrak{m}^2$ is injective. Assume that $w\in I\otimes \mathfrak{m}^2$ is such
that $f(w)=0$. Set $w = \sum_{i=1}^k  z_i\otimes x_i y_i$, where $z_i\in I$ and
$x_i,y_i\in \mathfrak{m}$. By (ii) there exist $x\in \mathfrak{m}$ such that
$x^2\notin D$ and $x_iy_i\in x^2 R$ for all $1\leq i\leq n$. Put $x_iy_i=x^2r_i$,
where $r_i\in R$. Then $w=z\otimes x^2$, where $z =\sum_{i=1}^k z_ir_i \in I$.
Hence $f(z \otimes x^2)=0 \Leftrightarrow zx^2=0$. If $z=0$ then $w=0$ and the proof
is finished.
So assume that $z\neq 0$. Using Theorem \ref{thm2.3}$(iii)$, we obtain that
$(0:a)\subseteq D$ for all $a\in \mathfrak{m}\setminus D$. Since $x^2\in
\mathfrak{m}\setminus D$, it follows that $z\in D$ and either $zx = 0$ or $zx\neq
0$. If $zx = 0$, then $z\in D\setminus 0$ and $x\in (0:z)$.
It follows by Property (\ref{hypo1}) that $(0:z)\neq xR+D$. So there exists $y\in
\mathfrak{m}$ such that $y\in (0:z)$ and $y\notin xR+D$. By Theorem
\ref{thm2.3}$(i)$, we obtain that $R/D$ is a local arithmetical ring. Hence
$(y)\not\subset (x)$. So $x=cy+d'$, where $c\in \mathfrak{m}$ and $d'\in D$.
Computing $w$, we obtain
\[
w = z\otimes x^2 = z\otimes (cy+d')^2=z\otimes (c^2y^2+2cyd'+d'^2)=zy^2\otimes
c^2+zd'\otimes 2cy+z\otimes d'^2 .
\]
Noting that $d'^2,zd'\in D^2=0$ and that $zy^2=0$, we obtain $w=0$. If $zx\neq 0$,
we have $zx\in D\setminus 0$ and $x\in (0:zx)$. By (\ref{hypo1}), there exists
$h\in \mathfrak{m}$ such that $h\in (0:zx)$ and $h\notin xR+D$. Using the same
argument as above, there exists an $a \in \mathfrak{m}$ such that $x=ah+d''$.
Observing that $zd'',d''^2\in D^2$ we obtain that $w = z\otimes x^2=z\otimes
(ah+d'')^2=z\otimes a^2h^2$. Furthermore, by (i) we can write $a=b+d$ where $b\in
\mathfrak{m}^2$ and $d\in D$. Therefore
\begin{equation}\label{tensor}
w=z\otimes (ah^2(b+d))= z\otimes (ah^2b)+z\otimes (ah^2d) =(zah^2)\otimes
b+(zd)\otimes (ah^2).
\end{equation}
Substituting $0=zd\in D^2$ and $ah=x-d''$ in (\ref{tensor}) and recalling that $h\in
(0:zx)$, we obtain $w=(zxh)\otimes b-zhd''\otimes b=0$.
\end{proof}

\begin{lemma}\label{lemma1.2}
Let $(R,\mathfrak{m})$ be a local Gaussian ring with $\wgd(R)=n$. Let $\mathfrak{m}$
be the nilradical of $R$. If $R$ satisfies Property (\ref{hypo1})
and $\mathfrak{m}$ is not nilpotent, then $\Tor_{n-1}(R/\mathfrak{m} , -)=0$ for all
$n\geq 3$.
\end{lemma}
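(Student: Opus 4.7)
The plan is to compute $\Tor_{n-1}(R/\mathfrak{m},-)$ by a dimension-shift, using the vanishing of $\Tor_n(R/\mathfrak{m},-)$ supplied by Lemma \ref{lemma5} together with the flatness of $\mathfrak{m}^2$ from Lemma \ref{flat}(iii).

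First I would verify the hypotheses of those two auxiliary lemmas. Since $\mathfrak{m}$ is the nilradical, every $x\in\mathfrak{m}$ is nilpotent and therefore a zero divisor, so Lemma \ref{lemma5} gives $\Tor_n(R/\mathfrak{m},-)=0$. Also $\mathfrak{m}\neq D$, for otherwise $x^2=0$ for every $x\in\mathfrak{m}$ and Theorem \ref{thm2.2}(iv) forces $xy=0$ for all $x,y\in\mathfrak{m}$, whence $\mathfrak{m}^2=0$, contradicting the hypothesis that $\mathfrak{m}$ is not nilpotent. Hence Lemma \ref{flat}(iii) applies and $\mathfrak{m}^2$ is a flat $R$-module.

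Next, I would splice the long exact $\Tor$ sequences for the two short exact sequences
\[
0\to \mathfrak{m}\to R\to R/\mathfrak{m}\to 0, \qquad 0\to \mathfrak{m}^2\to \mathfrak{m}\to \mathfrak{m}/\mathfrak{m}^2\to 0.
\]
Using $\Tor_i(R,-)=0$ for $i\geq 1$, the first gives $\Tor_i(R/\mathfrak{m},-)\cong \Tor_{i-1}(\mathfrak{m},-)$ for $i\geq 2$; using flatness of $\mathfrak{m}^2$, the second gives $\Tor_j(\mathfrak{m},-)\cong \Tor_j(\mathfrak{m}/\mathfrak{m}^2,-)$ for $j\geq 2$. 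Combining, $\Tor_i(R/\mathfrak{m},-)\cong \Tor_{i-1}(\mathfrak{m}/\mathfrak{m}^2,-)$ for every $i\geq 3$.

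Finally, observe that $\mathfrak{m}/\mathfrak{m}^2$ is naturally an $R/\mathfrak{m}$-module, and since $R/\mathfrak{m}$ is a field it is free, so $\mathfrak{m}/\mathfrak{m}^2\cong \bigoplus_I R/\mathfrak{m}$ for some index set $I$. As $\Tor$ commutes with direct sums,
\[
\Tor_i(R/\mathfrak{m},-)\cong \bigoplus_I \Tor_{i-1}(R/\mathfrak{m},-)\quad\text{for all } i\geq 3.
\]
Taking $i=n\geq 3$ and using that the left hand side vanishes by Lemma \ref{lemma5} forces $\Tor_{n-1}(R/\mathfrak{m},-)=0$ whenever $I\neq\emptyset$. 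The degenerate case $I=\emptyset$ amounts to $\mathfrak{m}=\mathfrak{m}^2$; then $\mathfrak{m}$ itself is flat by Lemma \ref{flat}(iii), and the first short exact sequence yields $\Tor_i(R/\mathfrak{m},-)=0$ for every $i\geq 2$, which settles this case as well. No technical obstacle is expected: the only conceptual hurdle is the passage to $\mathfrak{m}/\mathfrak{m}^2$, and once the flatness of $\mathfrak{m}^2$ is in hand the argument is purely homological.
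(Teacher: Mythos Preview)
Your argument is correct and follows essentially the same strategy as the paper: both rely on the flatness of $\mathfrak{m}^2$ from Lemma~\ref{flat}(iii), the decomposition $\mathfrak{m}/\mathfrak{m}^2\cong\bigoplus R/\mathfrak{m}$, and the vanishing $\Tor_n(R/\mathfrak{m},-)=0$ from Lemma~\ref{lemma5}. The only cosmetic difference is the choice of short exact sequences used for the dimension shift (the paper works with $0\to\mathfrak{m}^2\to R\to R/\mathfrak{m}^2\to 0$ and $0\to\mathfrak{m}/\mathfrak{m}^2\to R/\mathfrak{m}^2\to R/\mathfrak{m}\to 0$ instead of your pair), and your explicit check that $\mathfrak{m}\neq D$ is a welcome detail the paper leaves implicit.
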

\begin{proof} By applying the long exact sequence of $\Tor$ groups to the short
exact sequence $0\to \mathfrak{m^2}\to R\to R/{\mathfrak{m^2}}\to 0$ and using Lemma
\ref{flat}$(iii)$, it follows that $\Tor_k(R/\mathfrak{m^2}, -)=0$ for all $k\geq
2$. If $\mathfrak{m} =\mathfrak{m^2}$, then the lemma is proved.
So assume that $\mathfrak{m} \neq \mathfrak{m^2}$. Observing that $\mathfrak{m} /
\mathfrak{m^2}$ is a
vector space over $R/\mathfrak{m}$, we obtain that $\mathfrak{m} /\mathfrak{m^2} =
\bigoplus R/\mathfrak{m}$. Consider the short exact sequence
$0\to \mathfrak{m} / \mathfrak{m^2} \to R / \mathfrak{m^2}\to R / \mathfrak{m} \to 0$.
Consider the following segment of the corresponding long exact sequence of $\Tor$
groups
\[
 \Tor_n(R / \mathfrak{m} , -)\to \Tor_{n-1}(\mathfrak{m}/ \mathfrak{m^2}, -)\to
\Tor_{n-1}(R / \mathfrak{m^2}, -) .
\]
Using Lemma \ref{lemma5} and Lemma \ref{flat}, we obtain that
$\Tor_{n-1}(\mathfrak{m} / \mathfrak{m^2}, -)=0$. This shows that
$\Tor_{n-1}(\bigoplus R/\mathfrak{m}, -)=0$. Recalling that $\Tor_{n-1}(\bigoplus
R/\mathfrak{m}, -)=\bigoplus \Tor_{n-1}(R/\mathfrak{m}, -)$ proves our lemma.
\end{proof}

The idea of the next lemma is taken from \cite{O}, but we give a slightly different
proof.

\begin{lemma}\label{lemma6}
Let $(R,\mathfrak{m})$ be a local arithmetical ring with nilradical $\mathfrak{m}$.
For any $x\in \mathfrak{m}\setminus 0$, if $(0:x)=I$ then $(0:I)=(x)$.
\end{lemma}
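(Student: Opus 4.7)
The plan is to show the two inclusions $(x)\subseteq (0:I)$ and $(0:I)\subseteq (x)$ separately. The first is immediate: for any $r\in R$ and $i\in I=(0:x)$, $(rx)i = r(xi)=0$, so $rx\in (0:I)$.

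For the nontrivial direction, I would take $y\in(0:I)$ and argue by contradiction, assuming $y\notin (x)$. Since $R$ is local arithmetical, its ideals are totally ordered by inclusion (by the result of Jensen cited above Theorem \ref{thm2.1}). Hence $(x)$ and $(y)$ are comparable, and $y\notin (x)$ forces $(x)\subsetneq(y)$. Write $x=cy$; if $c$ were a unit, then $y\in(x)$, a contradiction, so $c$ is a non-unit, i.e.\ $c\in\mathfrak{m}$. Because $\mathfrak{m}$ is the nilradical, $c$ is nilpotent; choose the minimal $n\geq 1$ with $c^n=0$. Note $x\neq 0$ forces $c\neq 0$, hence $n\geq 2$.

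The key step is a downward induction on $k$ showing $c^k x=0$ for every $0\leq k\leq n-1$; taking $k=0$ will yield $x=0$, the desired contradiction. For the base case $k=n-1$,
\[
c^{n-1}x = c^{n-1}(cy) = c^n y = 0.
\]
For the inductive step, suppose $c^k x=0$ with $k\geq 1$. Then $c^k\in(0:x)=I$, and since $y\in (0:I)$ we get $c^k y=0$. Using $cy=x$ this rewrites as $c^{k-1}x = c^{k-1}(cy) = c^k y = 0$, completing the induction.

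The only subtle point, and really the heart of the argument, is this interplay between the chain condition (which provides the witness $c$ with $x=cy$) and the nilpotence of $c$ (which provides the starting point $c^{n-1}x=0$ to push the induction through $I$ and $(0:I)$). Once these are in place, the telescoping between ``$c^kx=0$'' and ``$c^ky=0$'' via the relation $x=cy$ runs automatically, and no further case analysis is required.
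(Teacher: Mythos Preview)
Your proof is correct and rests on the same core observation as the paper's: from $y\in(0:I)\setminus(x)$ the chain condition gives $x=cy$ with $c\in\mathfrak{m}$ nilpotent, and then the relation $x=cy$ lets you bounce between membership in $I=(0:x)$ and annihilation by $y\in(0:I)$.

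Where the two arguments diverge is in how the induction is organized. The paper runs an \emph{upward} induction showing $I\subset(\lambda^k)$ for all $k$: given $b\in I$, write $b=\lambda t$, observe $xt=\lambda z t=zb=0$ so $t\in I$, and apply the inductive hypothesis to $t$. At the end one gets $I=0$, which contradicts the fact that a nonzero nilpotent $x$ must have nontrivial annihilator. Your \emph{downward} induction instead tracks the single sequence $c^k$, showing $c^kx=0\Rightarrow c^k\in I\Rightarrow c^ky=0\Rightarrow c^{k-1}x=0$, and lands directly on $x=0$. Your version is a bit more economical: it avoids the extra step of explaining why $I=0$ is impossible, and it never needs to handle an arbitrary element of $I$. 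The paper's version, on the other hand, yields the stronger intermediate statement $I\subset(\lambda^k)$, which could be of independent interest. Both are perfectly valid packagings of the same idea.
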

\begin{proof}  Clearly $(x)\subseteq (0:I)$. We want to show that $(0:I)\subseteq
(x)$. Towards that end assume that there exists a $z\in (0:I)$ such that $z\notin
(x)$. Recalling that the ideals in a local arithmetical ring are linearly ordered
under inclusion, we obtain that $x=\lambda z$ where $\lambda\in \mathfrak{m}$. Hence
$\lambda\notin I$ which implies that $I\subset (\lambda)$. By induction on $k$, we
will show that $I\subset (\lambda^k)$ for all $k\in \mathbb{N}$. The case $k=1$ is
obvious. Let $b\in I$ be arbitrary. Since $I\subset (\lambda)$ there exists a $t\in
\mathfrak{m}$ such that $b=\lambda t$. Notice that we have $0=zb=z\lambda t=xt$.
Hence $t\in I=(0:x)$. By induction hypothesis $I\subset (\lambda^k)$. So
$t=\lambda^kt_1$ where $t_1\in \mathfrak{m}$. Hence $b=\lambda^{k+1} t_1$, where
$t_1\in \mathfrak{m}$. Thus $I\subset (\lambda^{k+1})$ for all $k\in \mathbb{N}$.
Since $\lambda$ is nilpotent, we obtain that $I=0$, a contradiction.
\end{proof}

In what follows let $R'=R/D$ and $\mathfrak{m'}= \mathfrak{m}/D$. Recall that if $R$
is a local Gaussian ring, then $R'$ is a local arithmetical ring by Theorem
\ref{thm2.3}$(i)$.

\begin{lemma}\label{lemma7}
Let $(R,\mathfrak{m})$ be a local Gaussian ring with nilradical $\mathfrak{m}$. If
$\wgd (R)=n\geq 1$, then
\begin{itemize}
  \item[(i)] there is a non trivial element $x\in \mathfrak{m'}$ such that $\wgd_R
(R'/xR')=n$;
 \item[(ii)] for any non trivial element $z\in \mathfrak{m'}$ and  ideal $J\subset
R'$ such that $z\in J$, $zR'\neq J$, the natural projection $R'/zR'\to R'/J$
induces a trivial map
$0:\Tor_n(R'/zR', -)\to \Tor_n(R'/J, -)$;
  \item[(iii)] $\wgd_R (R'/zR')=n$ for any non trivial element $z\in \mathfrak{m'}$.
\end{itemize}
\end{lemma}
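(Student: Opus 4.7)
The plan is to prove (i), (ii), (iii) in sequence, with (iii) combining the first two. Throughout, $R'=R/D$ is a local arithmetical ring (Theorem~\ref{thm2.3}(i)) with nilradical $\mathfrak{m}'=\mathfrak{m}/D$, and every finitely generated ideal of $R'$ is principal (by linear ordering); since $\wgd(R)=n<\infty$, Proposition~\ref{prop2.1} forces $\mathfrak{m}$ to be non-nilpotent, and Theorem~\ref{thm2.3}(iv) then yields $\mathfrak{m}'=\mathfrak{m}'^{2}$.

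For (i), I would take any ideal $I\subseteq R$ with $\wgd_R(R/I)=n$ and use Lemma~\ref{lemma2} to assume $I\supseteq D$; Lemma~\ref{lemma4} forces $I\neq D$, so $I/D$ is a nonzero ideal of $R'$. Writing $I/D$ as the directed union of its principal subideals $xR'$, one has $R/I\cong\varinjlim R'/xR'$, and since $\Tor_n$ commutes with filtered colimits the non-vanishing of $\Tor_n(R/I,M)$ for some $M$ transfers to $\Tor_n(R'/xR',M)\neq 0$ for some $x$, whence $\wgd_R(R'/xR')=n$.

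For (ii), I would first reduce to the principal case $J=yR'$ with $zR'\subsetneq yR'$: any $w\in J\setminus zR'$ gives the principal ideal $zR'+wR'=yR'$ strictly between $zR'$ and $J$, so the projection $R'/zR'\to R'/J$ factors through $R'/yR'$. In the principal case, write $z=sy$ with $s\in\mathfrak{m}'$ and analyze the short exact sequence $0\to yR'/zR'\to R'/zR'\to R'/yR'\to 0$. The kernel identifies as $yR'/zR'\cong R'/(zR':y)$, and $(zR':y)=sR'+(0:y)=sR'$: the sum collapses because $sy=z\neq 0$ forces $s\notin(0:y)$, hence $(0:y)\subsetneq sR'$ by linear ordering. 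The long exact sequence of $\Tor$ together with $\Tor_{n+1}(R'/yR',-)=0$ reduces the claim to surjectivity of $\Tor_n(R'/sR',-)\to\Tor_n(R'/zR',-)$, which I would prove by exploiting Lemma~\ref{lemma6}'s annihilator duality $(0:(0:a))=(a)$ in $R'$.

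For (iii), let $x$ be the element produced by (i). Given any nonzero $z\in\mathfrak{m}'$, the ideals $xR'$ and $zR'$ are comparable. If $xR'\subsetneq zR'$, write $x=uz$ with $u\in\mathfrak{m}'$; the short exact sequence $0\to zR'/xR'\to R'/xR'\to R'/zR'\to 0$, combined with (ii) (which kills the induced $\Tor_n$ map) and with $\Tor_{n+1}(R'/zR',-)=0$, yields an isomorphism $\Tor_n(zR'/xR',-)\cong\Tor_n(R'/xR',-)$. An annihilator computation as in (ii) gives $zR'/xR'\cong R'/uR'$, hence $\wgd_R(R'/uR')=n$ for every $u\in\mathfrak{m}'$ with $uR'\supseteq xR'$. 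The remaining case $zR'\subseteq xR'$ is handled by iterating this upgrade together with the direct-limit argument from (i), exploiting $\mathfrak{m}'=\mathfrak{m}'^{2}$ to produce some $x'\in zR'$ with $\wgd_R(R'/x'R')=n$, from which the upgrade reaches $z$. The main obstacle is the surjectivity step in (ii): once the clean structural reductions are performed, producing the required Tor-level surjection appears to require a careful element-chase exploiting Lemma~\ref{lemma6} and the full Gaussian (not merely arithmetical) structure.
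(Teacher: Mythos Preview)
Your argument for (i) matches the paper's. The substantive issues are in (ii) and (iii).

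For (ii), you reduce correctly to the principal case $J=yR'$ and then to showing that the inclusion $yR'/zR'\hookrightarrow R'/zR'$ induces a surjection on $\Tor_n$. But you leave this surjectivity unproved, and in fact your reduction is circular: identifying $yR'/zR'\cong R'/sR'$ via multiplication by $y$, the cokernel of $R'/sR'\hookrightarrow R'/zR'$ is again $R'/yR'$, so asking for surjectivity on $\Tor_n$ is equivalent to asking that the projection $R'/zR'\to R'/yR'$ induce zero on $\Tor_n$, which is the very statement you are trying to prove. The paper closes (ii) by a different idea: factor the projection through a module whose $\Tor_n$ vanishes outright. Set $I=(0:z)$ in $R'$; Lemma~\ref{lemma6} gives $(0:I)=zR'\subsetneq J$, and linear ordering of ideals then yields some $y\in I$ with $(0:y)\subseteq J$ (otherwise $J\subseteq(0:y)$ for all $y\in I$, forcing $J\subseteq(0:I)=zR'$). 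Since $yz=0$ we have $zR'\subseteq(0:y)\subseteq J$, so $R'/zR'\to R'/J$ factors through $R'/(0:y)\cong yR'\subseteq R'$. Lemmas~\ref{lemma3} and~\ref{lemma4} give $\Tor_n(yR',-)\hookrightarrow\Tor_n(R',-)=0$, so the middle term already has vanishing $\Tor_n$ and the composite is zero.

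For (iii), your treatment of the case $zR'\supsetneq xR'$ is recoverable, but the case $zR'\subsetneq xR'$ is only gestured at (``iterating\ldots exploiting $\mathfrak{m}'=\mathfrak{m}'^{2}$'') and is not a proof. The paper handles both cases directly via multiplication maps between cyclic quotients. If $z=\lambda x$ with $\lambda\in\mathfrak{m}'$, then $\alpha:R'/xR'\to R'/zR'$, $r+xR'\mapsto\lambda r+zR'$, is well defined and injective (as $x\notin(0:\lambda)$ forces $(0:\lambda)\subseteq xR'$), so Lemma~\ref{lemma3} gives $\Tor_n(R'/xR',-)\hookrightarrow\Tor_n(R'/zR',-)\neq 0$. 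If instead $x=\lambda' z$, the multiplication-by-$\lambda'$ map $\sigma:R'/zR'\hookrightarrow R'/xR'$ has cokernel $R'/\lambda'R'$, and part~(ii) applied to $xR'\subsetneq\lambda'R'$ shows the projection $R'/xR'\to R'/\lambda'R'$ induces zero on $\Tor_n$; hence $\sigma_*$ is onto and $\Tor_n(R'/zR',-)\neq 0$. No iteration, no direct limit, and no use of $\mathfrak{m}'=\mathfrak{m}'^{2}$ is needed.
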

\begin{proof} (i): There is an ideal $I\subset R$ such that $\wgd_R (R/I)=n$.
Without loss of generality one can assume that $D\subset I$
(see Lemma \ref{lemma2}). Hence $\wgd_R (R'/I')=n$, where $I'=I/D$. Using Lemma
\ref{lemma4}, we obtain that $I'\neq 0$. Let $\mathcal{X}$ denote the
following class of ideals: $J\in \mathcal{X}$ iff $J\subset I'$ and $J$ is finitely
generated. Since
$\Tor_n(R'/I', -)= \varinjlim_{J\in \mathcal{X}}\Tor_n(R'/ J, -)$ and $\wgd_R
(R'/I')=n$, there exist $x_1,\dots, x_m\in I'$ such that $\wgd_R (R'/ (x_1',\dots ,
x_m))=n$. Since $R'$ is local arithmetical, $(x_1,\dots ,x_m)=(x_i)$ for some $1\leq
i\leq m$. Thus the first part of the lemma is proved.

(ii):  Let $I=(0: z)\subset R'$. Using Lemma \ref{lemma6}, we obtain that
$(0:I)=zR'$. This implies that $(0:I)\subset J$ and $(0:I)\neq J$. Hence there
exists
$y\in I$ such that $(0:y)\subset J$ and $(0:y)\neq J$. Thus we have the inclusions
$zR' \subset (0:y)\subset J$ which give rise to the natural projections
$R'/zR' \to R'/(0:y)\to R'/J$. Using Lemma's \ref{lemma3} and \ref{lemma4}, we
obtain that $\Tor_n(R'/(0:y), -)=0$, since $R'/(0:y)\cong yR' \subset R'$. Hence the
composition
of the following maps $\Tor_n(R'/zR', -) \to \Tor_n(R'/(0:y), -)\to \Tor_n(R'/J, -)$
is trivial.

(iii): By (i) we have a non trivial element $x\in \mathcal M'$ such that $\wgd_R
(R'/xR')=n$. For any non trivial element $z\in \mathfrak{m'}$, either $z\in xR'$ or
$x\in zR'$. If $z\in xR'$ and $z\neq x$, then there exists $\lambda \in
\mathfrak{m'}$ such that $z=\lambda x$. Define a map $\alpha : R'/xR'\to R'/zR'$ by
$\alpha(r+xR')= \lambda r+zR'$ for all $r\in R'$. Since $x\notin (0:\lambda)$, it
follows that $(0:\lambda)\subset xR'$. This shows that $\alpha$ is injective. Using
Lemma \ref{lemma3}, we obtain that $\alpha$ induces an inclusion $\Tor_n(R'/xR',
-)\hookrightarrow \Tor_n(R'/zR', -)$. Thus $\Tor_n(R'/zR', -)\neq 0$.

In the case when $x\in zR'$ and $x\neq z$, there exists $\lambda'\in \mathfrak{m'}$
such that $x=\lambda' z$. Define a map $\sigma : R'/zR'\to R'/xR'$ by
$\sigma(r+zR')=r +xR'$ for all $r\in R'$. Since $z\notin (0:\lambda')$, we obtain
that $(0:\lambda')\subset zR'$. Thus $\sigma$ is injective.
Consider the short exact sequence
 \[0 \rightarrow R'/zR'\xrightarrow{ \ \sigma \ } R'/xR'\xrightarrow{ \ \tau \ }
R'/\lambda' R' \rightarrow 0\,,\]
where $\tau $ is the natural projection. Observe that $xR'\subset \lambda' R'$ and
$xR'\neq \lambda' R'$. Using $(ii)$, we see that $\tau$ induces the trivial map
$0:\Tor_n( R'/xR', -)\to \Tor_n( R'/\lambda'R', -)$. Therefore $\sigma$ induces an
epimorphism \[\Tor_n( R'/zR', -)\twoheadrightarrow \Tor_n( R'/xR', -)\, , \]
which implies that $\Tor_n( R'/zR', -)\neq 0$.
\end{proof}

Let $\deg(r)$ denote the degree of nilpotency of $r\in R$. Noting that the
nilpotency degree of an element $r\in R$ is the smallest $k\in \mathbb{N}$ such that
$r^k=0$, we state our next lemma.

\begin{lemma}\label{deg}
Let $(R, \mathfrak{m})$ be a local Gaussian ring with nilradical $\mathfrak{m}$ and
let $\lambda\in \mathfrak{m}$. If $\mathfrak{m}$ is not nilpotent, then there exists
$z\in \mathfrak{m}$ such that $\deg(z) > \deg(\lambda)$.
\end{lemma}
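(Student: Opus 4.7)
The plan is to argue by contradiction. Suppose toward a contradiction that every $z\in\mathfrak{m}$ satisfies $\deg(z)\leq k:=\deg(\lambda)$, i.e. $z^k=0$ for all $z\in\mathfrak{m}$. The goal is to deduce that $\mathfrak{m}$ itself is nilpotent, contradicting the hypothesis.

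The key tool is Theorem~\ref{thm2.2}$(v)$, which says that the square of any finitely generated ideal is principal (generated by the square of one of its generators). This makes iterated squaring very well behaved: given a finitely generated ideal $I=(a_1,\dots,a_n)\subseteq\mathfrak{m}$, one has $I^2=(a_{i_1}^2)$ for some $i_1$; squaring this principal ideal gives $I^4=(a_{i_1}^4)$; iterating, $I^{2^s}=(a_{i_1}^{2^s})$ for every $s\geq 0$.

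Now I would choose $s$ large enough that $N:=2^s\geq k$. Given any $N$ elements $a_1,\dots,a_N\in\mathfrak{m}$, form the finitely generated ideal $I=(a_1,\dots,a_N)$. By the iterated-squaring observation above, $I^N=I^{2^s}=(a_{i_1}^N)$ for some $i_1\in\{1,\dots,N\}$. Since $a_{i_1}\in\mathfrak{m}$ and $N\geq k$, the standing assumption forces $a_{i_1}^N=0$, hence $I^N=0$. In particular $a_1a_2\cdots a_N\in I^N=0$. Since the $a_j$ were arbitrary in $\mathfrak{m}$, every product of $N$ elements from $\mathfrak{m}$ vanishes, so $\mathfrak{m}^N=0$. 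This contradicts the assumption that $\mathfrak{m}$ is not nilpotent.

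The substantive ingredient is really just the observation that Theorem~\ref{thm2.2}$(v)$ turns a uniform bound on nilpotency degrees of elements into a uniform bound on the nilpotency of finitely generated subideals, and thence on $\mathfrak{m}$ itself. I do not foresee a serious obstacle: once one recognizes that $I^2$ principal permits clean iterated squaring, the proof reduces to counting powers of $2$.
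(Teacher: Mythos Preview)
Your argument is correct. Both you and the paper argue by contradiction, assuming a uniform bound $k=\deg(\lambda)$ on the nilpotency degree of every element of $\mathfrak{m}$ and deducing that $\mathfrak{m}$ is nilpotent. The routes, however, are genuinely different.

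The paper uses Theorem~\ref{thm2.2}$(ii)$ together with Theorem~\ref{thm2.3}$(i)$: given $z_1,\dots,z_k\in\mathfrak{m}$, one writes each $z_i=r_i z+d_i$ with $z\in I=(z_1,\dots,z_k)$ and $d_i\in I\cap(0:I)\subset D$, then expands $\prod(r_iz+d_i)$ and kills every term using $z^k=0$, $zd_i=0$, and $D^2=0$. This yields the sharp bound $\mathfrak{m}^k=0$.

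Your approach uses only Theorem~\ref{thm2.2}$(v)$: since $I^2$ is principal generated by some $a_{i_1}^2$, iterated squaring gives $I^{2^s}=(a_{i_1}^{2^s})$, and choosing $N=2^s\geq k$ forces $I^N=0$. This is shorter and more self-contained (no appeal to the structure of $D$), at the minor cost of a looser nilpotency exponent $2^s$ rather than $k$. Since any finite exponent suffices for the contradiction, nothing is lost.
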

\begin{proof}
Let $\deg(\lambda)=n$. Suppose the lemma is not true, then $\deg(z)\leq
\deg(\lambda)$ for all $z\in \mathfrak{m}$. Now we will show that
$\mathfrak{m}^n=0$. This will give us a contradiction, as $\mathfrak{m}$ is not
nilpotent. Towards that end, let $z_1,\ldots,z_n\in \mathfrak{m}$ and consider
$I=(z_1,\ldots,z_n)$. Using Theorem \ref{thm2.2}$(ii)$, we can write $z_i=r_iz+d_i$
for some $z\in I$, $r_i\in R$ and $d_i\in I\cap (0:I)\subset (0:z_i)$ for all $1\leq
i\leq n$. So
\begin{equation}\label{prod}
z_1z_2\cdots z_n=\prod_{i=0}^n (zr_i+d_i)
\end{equation}
After expanding the right hand side of (\ref{prod}), observe that every term of the
expansion except the term $d_1\cdots d_n$ contains a $z$ and some $d_i$, where
$1\leq i\leq n$. Using Theorem \ref{thm2.3}$(i)$, it follows that $d_1\cdots d_n=0$.
Since $d_iz=0$ for all $1\leq i\leq n$, every term in the expansion is zero. Thus
$\mathfrak{m}^n=0$, a contradiction.

\end{proof}

\begin{lemma}\label{lemma8}
Let $(R,\mathfrak{m})$ be a local Gaussian ring with nilradical $\mathfrak{m}$. If
$\wgd (R)=n\geq 1$ and $\mathfrak{m}$ is not
nilpotent, then $\Tor_n(R'/a\mathfrak{m'}, -)=0$ for all non trivial $a\in
\mathfrak{m'}$.
\end{lemma}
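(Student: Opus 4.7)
The plan is to show $\Tor_n(R'/a\mathfrak{m'},-)=0$ by sandwiching $a\mathfrak{m'}$ between $aR'$ above and its principal subideals below, and combining Lemmas~\ref{lemma4}, \ref{lemma5} and \ref{lemma7}(ii). The argument proceeds in two stages: first reduce to showing that a specific map on $\Tor_n$ vanishes, then verify this via a filtered colimit over finitely generated subideals of $\mathfrak{m'}$.

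First I would form the short exact sequence
\[
0\to aR'/a\mathfrak{m'}\to R'/a\mathfrak{m'}\to R'/aR'\to 0
\]
and identify the left-hand term. The surjection $R'\to aR'/a\mathfrak{m'}$ given by $r\mapsto ar+a\mathfrak{m'}$ has kernel $\mathfrak{m'}+(0:_{R'}a)$; since $a$ is nontrivial in the local ring $R'$, the annihilator $(0:_{R'}a)$ is proper and hence contained in $\mathfrak{m'}$, so the kernel equals $\mathfrak{m'}$ and $aR'/a\mathfrak{m'}\cong R'/\mathfrak{m'}$. Applying the long $\Tor$ sequence and killing $\Tor_{n+1}(R'/aR',-)$ by $\wgd(R)=n$ together with $\Tor_n(R'/\mathfrak{m'},-)=\Tor_n(R/\mathfrak{m},-)$ by Lemma~\ref{lemma5} (every element of the nilradical $\mathfrak{m}$ is a zero divisor), I obtain an injection
\[
\Tor_n(R'/a\mathfrak{m'},-)\hookrightarrow\Tor_n(R'/aR',-)
\]
induced by the natural surjection.

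The heart of the proof is to show that this injection is zero. Write $\mathfrak{m'}=\bigcup_\alpha J_\alpha$ as the directed union of its finitely generated subideals; because $R'$ is local arithmetical, each $J_\alpha$ is principal, say $J_\alpha=y_\alpha R'$, so $a\mathfrak{m'}=\bigcup_\alpha ay_\alpha R'$ and $R'/a\mathfrak{m'}=\varinjlim_\alpha R'/ay_\alpha R'$. Since $\Tor_n$ commutes with filtered colimits, it suffices to show that each natural map $\Tor_n(R'/ay_\alpha R',-)\to\Tor_n(R'/aR',-)$ vanishes. If $ay_\alpha=0$ then $R'/ay_\alpha R'=R'$ and $\Tor_n(R',-)=0$ by Lemma~\ref{lemma4}. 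If $ay_\alpha\neq 0$ then $ay_\alpha$ is a nontrivial element of $\mathfrak{m'}$ with $ay_\alpha R'\subsetneq aR'$ (strictness holds because $a\in ay_\alpha R'$ would give $a(1-y_\alpha r)=0$ for some $r\in R'$, impossible as $1-y_\alpha r$ is a unit and $a\neq 0$), so Lemma~\ref{lemma7}(ii) applied with $z=ay_\alpha$ and $J=aR'$ delivers exactly the vanishing.

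Passing to the colimit then makes the composite $\Tor_n(R'/a\mathfrak{m'},-)\to\Tor_n(R'/aR',-)$ zero, and combined with the injectivity from the first stage this forces $\Tor_n(R'/a\mathfrak{m'},-)=0$, as required. The main delicacy I anticipate is the bookkeeping for the filtered colimit, namely verifying that the description $R'/a\mathfrak{m'}=\varinjlim R'/ay_\alpha R'$ is compatible at each level $\alpha$ with the projection onto $R'/aR'$; this is routine from the functoriality of quotients but should be spelled out carefully so that Lemma~\ref{lemma7}(ii) can be invoked uniformly at every stage of the colimit.
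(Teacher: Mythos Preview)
Your proof is correct but follows a genuinely different route from the paper's. The paper argues directly that the colimit $\varinjlim_{J}\Tor_n(R'/J,-)=\Tor_n(R'/a\mathfrak{m'},-)$ vanishes: it first proves, via a nilpotency-degree argument (Lemma~\ref{deg}), that $a\mathfrak{m'}$ is \emph{not} finitely generated, so every finitely generated $J\subset a\mathfrak{m'}$ satisfies $J\subsetneq a\mathfrak{m'}$; then Lemma~\ref{lemma7}(ii) is applied with target $J=a\mathfrak{m'}$ to kill each structure map $\Tor_n(R'/J,-)\to\Tor_n(R'/a\mathfrak{m'},-)$. You instead push past $a\mathfrak{m'}$ to $aR'$: the short exact sequence together with Lemma~\ref{lemma5} gives an injection $\Tor_n(R'/a\mathfrak{m'},-)\hookrightarrow\Tor_n(R'/aR',-)$, and you apply Lemma~\ref{lemma7}(ii) with the larger target $J=aR'$, where the strict inclusion $ay_\alpha R'\subsetneq aR'$ is immediate from the fact that $y_\alpha$ is a non-unit. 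This buys you something real: you never need to know that $a\mathfrak{m'}$ is infinitely generated, so Lemma~\ref{deg} and, with it, the hypothesis that $\mathfrak{m}$ is not nilpotent become unnecessary for this lemma. The trade-off is that you import Lemma~\ref{lemma5} and the short exact sequence (which the paper only introduces later, in the proof of Theorem~\ref{thm1}), but these are cheap. Your bookkeeping concern at the end is harmless: the projections $R'/ay_\alpha R'\to R'/aR'$ form a cocone compatible with the filtered system, so the vanishing passes to the colimit by the universal property.
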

\begin{proof} If $a\mathfrak{m'}=0$, then Lemma \ref{lemma4} gives the desired
result. So assume that $a\mathfrak{m'}\neq 0$.
We claim that $a\mathfrak{m'}$ is not a finitely generated ideal. Suppose
$a\mathfrak{m'}$ is a finitely generated ideal. Since $R'$ is an arithmetical ring,
there exists an element $\lambda \in \mathfrak{m'}$ such that $a\mathfrak{m'} =
a\lambda R'$. Let $\deg(x)$ denote
the degree of nilpotency of $x$ for all $x\in \mathfrak{m'}$. Since $\mathfrak{m}$
is not nilpotent, $\mathfrak{m'}$
 is not nilpotent. By Lemma \ref{deg}, there exists $z\in \mathfrak{m'}$ such that
$\deg(z)>\deg(\lambda)$.
  Observe that $\lambda \in z\mathfrak{m'}$, i.e. $\lambda =zh$ for some $h\in
\mathfrak{m'}$. Hence $az\neq 0$. Furthermore $1-hr$ is a unit for all $r\in R'$.
This implies that
$az-a\lambda r=a(z-zhr)=az(1-hr)\neq 0$. Thus $az\notin a\lambda R'$, a contradiction.

Now let $\mathcal{X}$ be the following class of ideals: $J\in \mathcal{X}$ iff
$J\subset a\mathfrak{m'}$ and $J$ is finitely generated. Then
$\Tor_n(R'/a\mathfrak{m'}, -)= \varinjlim_{J\in \mathcal{X}} \Tor_n(R'/ J, -)$.
Since $R'$ is a local arithmetical ring, there exists $c\in \mathfrak{m'}$ such that
$I=cR'$ for all $I\in \mathcal{X}$. As $a\mathfrak{m'}$ is not finitely generated,
$I\neq a\mathfrak{m'}$. Using Lemma \ref{lemma7}$(ii)$, we obtain that the natural
projection $R'/I\to R'/a\mathfrak{m'}$ induces a trivial homomorphism $0:\Tor_n(R'/
I, -)\to \Tor_n(R'/a\mathfrak{m'}, -)$.
Thus the canonical homomorphism
$\Tor_n(R'/ I, -)\to \varinjlim_{J\in \mathcal{X}}\Tor_n(R'/ J, -)$
is trivial for all $I\in \mathcal{X}$. This implies $\varinjlim_{J\in
\mathcal{X}}\Tor_n(R'/ J, -)=0$.
\end{proof}

\begin{theorem}\label{thm1}
Let $(R,\mathfrak{m})$ be a local Gaussian ring with nilradical $\mathfrak{m}$. If
$\mathfrak{m}$ is not nilpotent, then $\wgd (R)=\infty$.
\end{theorem}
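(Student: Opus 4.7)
I argue by contradiction: suppose $\wgd(R)=n<\infty$. Since $\mathfrak{m}$ is the nilradical and not nilpotent, $\mathfrak{m}\neq 0$ and every element of $\mathfrak{m}$ is a zero divisor, so Proposition~\ref{prop1.1} forces $n\geq 3$. Moreover $\mathfrak{m}\neq D$, for otherwise $\mathfrak{m}^{2}\subseteq D^{2}=0$ would make $\mathfrak{m}$ nilpotent; hence $\mathfrak{m}'\neq 0$. I then split on whether Property~(\ref{hypo1}) holds for $R$.

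\emph{If Property~(\ref{hypo1}) fails,} I would fix $x\in D\setminus\{0\}$ and $a\in R\setminus D$ with $(0:x)=aR+D$. Since $x\neq 0$, $a$ is not a unit, so $\bar{a}:=a+D\in \mathfrak{m}'\setminus\{0\}$. Multiplication by $x$ induces the isomorphism $xR\cong R/(0:x)=R/(aR+D)=R'/\bar{a}R'$, so by Lemma~\ref{lemma7}(iii) we get $\wgd_{R}(xR)=n$. On the other hand, $xR$ is an $R$-submodule of the flat module $R$, and the long exact sequence of $\Tor$ applied to $0\to xR\to R\to R/xR\to 0$, combined with $\Tor_{n+1}(R/xR,-)=0$, yields $\Tor_{n}(xR,-)\cong \Tor_{n+1}(R/xR,-)=0$. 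Thus $\wgd_{R}(xR)\leq n-1$, a contradiction.

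\emph{If Property~(\ref{hypo1}) holds,} I would pick a nontrivial $z\in \mathfrak{m}'$ and consider
\[
0\longrightarrow zR'/z\mathfrak{m}'\longrightarrow R'/z\mathfrak{m}'\longrightarrow R'/zR'\longrightarrow 0.
\]
The middle term has $\Tor_{n}(R'/z\mathfrak{m}',-)=0$ by Lemma~\ref{lemma8}; the leftmost term is cyclic and annihilated by $\mathfrak{m}'$, hence either $0$ or $R/\mathfrak{m}$; and the right-hand term satisfies $\wgd_{R}(R'/zR')=n$ by Lemma~\ref{lemma7}(iii). Lemma~\ref{lemma1.2}---which applies precisely because $R$ satisfies Property~(\ref{hypo1}), $\mathfrak{m}$ is not nilpotent, and $n\geq 3$---gives $\Tor_{n-1}(R/\mathfrak{m},-)=0$. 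Feeding these vanishing statements into the long exact sequence of $\Tor$ sandwiches $\Tor_{n}(R'/zR',-)$ between two zero groups, forcing it to vanish and contradicting Lemma~\ref{lemma7}(iii).

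The main obstacle is the second case: one must choose the right short exact sequence and then verify that Lemmas~\ref{lemma7}, \ref{lemma8}, and \ref{lemma1.2}---which were developed for distinct purposes---interlock exactly so as to squeeze $\Tor_{n}(R'/zR',-)$ to zero. The first case, by contrast, falls out cleanly from the isomorphism $R'/\bar{a}R'\cong xR$ together with the flatness of $R$, and this is what makes the decomposition according to Property~(\ref{hypo1}) the natural strategy.
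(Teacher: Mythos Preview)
Your proof is correct and follows essentially the same strategy as the paper: contradiction via the dichotomy on Property~(\ref{hypo1}), using in Case~1 the isomorphism $xR\cong R'/\bar aR'$ together with $xR\subset R$ (your long exact sequence argument is exactly Lemma~\ref{lemma3}), and in Case~2 the short exact sequence $0\to zR'/z\mathfrak m'\to R'/z\mathfrak m'\to R'/zR'\to 0$ combined with Lemmas~\ref{lemma8}, \ref{lemma1.2}, and \ref{lemma7}(iii). You even supply a couple of justifications the paper leaves implicit, namely that $\mathfrak m\neq D$ (so $\mathfrak m'\neq 0$) and that $\bar a\in\mathfrak m'\setminus\{0\}$, which are needed to invoke Lemma~\ref{lemma7}(iii).
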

\begin{proof} Suppose the theorem is not true, then the $\wgd (R)=n<\infty$. Using
Proposition \ref{prop1.1}, we obtain that $n\geq 3$. We divide
the proof into two cases.

{\bf Case 1.} $R$ does not satisfy Property (\ref{hypo1}).

Hence there exists $x\in D\setminus 0$ and $a\in R\setminus D$ such that
$(0:x)=aR+D$. Thus we have an isomorphism $R/(aR+D) \cong xR$. Using Lemma
\ref{lemma3}  and noting that $xR\subset R$, we obtain an inclusion $\Tor_n
(R/(aR+D), -)\hookrightarrow \Tor_n (R, -)$. Hence $\Tor_n (R/(aR+D), -)=0$. But
using Lemma \ref{lemma7}$(iii)$, we obtain that $\wgd_R(R/(aR+D))=n$, a
contradiction.

{\bf Case 2.} $R$ satisfies Property (\ref{hypo1}).

Consider the short exact sequence $0\to aR'/a\mathfrak{m'}\to R'/a\mathfrak{m'} \to
R'/aR'\to 0$. From the corresponding long exact sequence of $\Tor$ groups, consider
the following segment
 \[\Tor_n(R' / a\mathfrak{m'} , -) \to \Tor_{n}(R' / aR', -) \to \Tor_{n-1}(aR' /
a\mathfrak{m'},-)\,.\]
  Applying Lemma \ref{lemma8}, we obtain that  $\Tor_n(R' / a\mathfrak{m'} ,-)=0$.
Observing that $aR' / a\mathfrak{m'}\cong R/\mathfrak{m}$ and using Lemma
\ref{lemma1.2} yields
$\Tor_{n-1}(aR' / a\mathfrak{m'}, -)=0$. Hence $\Tor_{n}(R' / aR', -)=0$. But using
Lemma \ref{lemma7} $(iii)$, we obtain that $\wgd_R(R' / aR')=n$, a contradiction.
\end{proof}

\section{Conjecture}
In this section, we restate  \cite[Theorem 6.4]{BG} with an additional hypothesis
and prove the theorem under this additional hypothesis. We also give an example to
show that the proof of Theorem 6.4 as given in \cite{BG} is not complete. We need
the next lemma to give a proof of the modification of \cite[Theorem 6.4]{BG}. We can
use the same idea to give a proof of our Main Theorem.

\begin{lemma}\label{lemma5.1}
Let $R$ be a local Gaussian ring with nilradical $\mathcal{N}$. If $\mathcal{N}\neq
D$, then the maximal ideal of $R_{\mathcal{N}}$ is non-zero.
\end{lemma}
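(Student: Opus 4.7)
The plan is to exhibit a specific element of $\mathcal{N}$ whose image in $R_{\mathcal{N}}$ is nonzero, which will show $\mathcal{N}R_{\mathcal{N}}\neq 0$. Recall that by Theorem \ref{thm2.1}$(iv)$, $\mathcal{N}$ is the unique minimal prime of $R$, so $R_{\mathcal{N}}$ is a local ring with maximal ideal $\mathcal{N}R_{\mathcal{N}}$.

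Since $D\subseteq \mathcal{N}$ always (every square-zero element is nilpotent) and $\mathcal{N}\neq D$ by hypothesis, I can pick some $x\in \mathcal{N}\setminus D$. In particular $x\in \mathfrak{m}\setminus D$, so Theorem \ref{thm2.3}$(iii)$ applies and gives $(0:x)\subseteq D\subseteq \mathcal{N}$.

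Now I claim $x/1\neq 0$ in $R_{\mathcal{N}}$. Indeed, if $x/1=0$, then there exists $s\in R\setminus \mathcal{N}$ with $sx=0$, i.e. $s\in (0:x)\subseteq \mathcal{N}$, a contradiction. Since $x\in \mathcal{N}$, its image lies in $\mathcal{N}R_{\mathcal{N}}$, and we have just shown this image is nonzero. Therefore $\mathcal{N}R_{\mathcal{N}}\neq 0$, as required.

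There is essentially no obstacle here; the whole content is repackaging Theorem \ref{thm2.3}$(iii)$ (annihilators of elements outside $D$ land in $D$) into a statement about the localization at $\mathcal{N}$. The hypothesis $\mathcal{N}\neq D$ is exactly what is needed to produce an element witnessing the nontriviality of $\mathcal{N}R_{\mathcal{N}}$.
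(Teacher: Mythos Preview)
Your proof is correct and follows essentially the same approach as the paper: pick $x\in\mathcal{N}\setminus D$ and show $x/1\neq 0$ in $R_{\mathcal{N}}$ by deriving a contradiction from the existence of an annihilator outside $\mathcal{N}$. The only cosmetic difference is that you invoke Theorem~\ref{thm2.3}$(iii)$ to get $(0:x)\subseteq D$, whereas the paper appeals directly to Theorem~\ref{thm2.2}$(iv)$ (from $xy=0$ with $x,y\notin D$ one of $x^2,y^2$ must vanish); these are equivalent here.
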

\begin{proof}
Using Theorem \ref{thm2.1}, it follows that the nilradical $\mathcal{N}$ is the
unique minimal prime ideal of $R$. Thus the maximal ideal and the nilradical of
$R_{\mathcal{N}}$ coincide and let us denote it by $\mathcal{N'}$. We want to show
that $\mathcal{N'}\neq 0$. Towards that end, let $x\in \mathcal{N}\setminus D$. We
will show that $0\neq \frac{x}{1}\in R_\mathcal{N}$. Suppose not, then there exists
$y\in R\setminus \mathcal{N}$ such that $xy=0$. Using Theorem \ref{thm2.2}$(iv)$, it
follows that $x^2=0$ or $y^2=0$, a contradiction.
\end{proof}

Noting that the nilpotency degree of an ideal $I$ of $R$ is the smallest $k\in
\mathbb{N}$ such that $I^k=0$, we now restate and prove Theorem 6.4 of \cite{BG}.

\begin{theorem}\label{thm5.1} Let $R$ be a Gaussian ring admitting a maximal ideal
$\mathfrak{m}$ such that the nilradical $\mathcal{N}$ of the localization
$R_{\mathfrak{m}}$ is a non-zero nilpotent ideal. If the nilpotency degree of
$\mathcal{N}\geq 3$, then $\wgd(R)=\infty$.
\end{theorem}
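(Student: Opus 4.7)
The plan is to reduce to the local case, then localize a second time at the nilradical in order to land in the situation of Proposition~\ref{prop2.1}. First, since $\wgd(R)=\sup_{\mathfrak{p}\in\Spec(R)}\wgd(R_\mathfrak{p})$, it suffices to prove $\wgd(R_\mathfrak{m})=\infty$. By Theorem~\ref{thm2.1}(i), $R_\mathfrak{m}$ is a local Gaussian ring whose nilradical is the given $\mathcal{N}$, which is a nonzero nilpotent ideal of degree $\geq 3$, so we may replace $R$ by $R_\mathfrak{m}$ and assume $R$ itself has these properties. By Theorem~\ref{thm2.1}(iv), $\mathcal{N}$ is then the unique minimal prime of $R$, and hence $S:=R_\mathcal{N}$ is a local ring whose maximal ideal $\mathfrak{n}:=\mathcal{N}S$ coincides with its nilradical. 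By Theorem~\ref{thm2.1}(ii), $S$ is Gaussian, and $\mathfrak{n}$ is nilpotent (being the extension of a nilpotent ideal).

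The crucial observation is that $\mathcal{N}\neq D$. Indeed, every $x\in D$ satisfies $x^2=0$ and is therefore nilpotent, so $D\subseteq\mathcal{N}$; on the other hand $D^2=0$ by Theorem~\ref{thm2.3}(i), while $\mathcal{N}^2\neq 0$ by the nilpotency-degree hypothesis, so the inclusion is strict. Lemma~\ref{lemma5.1} therefore applies and gives $\mathfrak{n}\neq 0$. Thus $S$ is a local Gaussian ring with a nonzero nilpotent maximal ideal, and Proposition~\ref{prop2.1} yields $\wgd(S)=\infty$ (via the short exact sequence $0\to\mathfrak{n}\to S\to S/\mathfrak{n}\to 0$, which converts infinite flat dimension of $\mathfrak{n}$ into infinite weak global dimension of $S$). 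Combining this with $\wgd(R)\geq\wgd(R_\mathcal{N})=\wgd(S)$ gives the conclusion.

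There is essentially no technical obstacle in this argument; the only delicate point is verifying the hypothesis $\mathcal{N}\neq D$ of Lemma~\ref{lemma5.1}, which is provided exactly by the assumption on the nilpotency degree. This also explains transparently why the degree-$2$ case has to be excluded in the statement: if $\mathcal{N}^2=0$ then every element of $\mathcal{N}$ lies in $D$, so $\mathcal{N}=D$, Lemma~\ref{lemma5.1} no longer forces $\mathfrak{n}\neq 0$, and one cannot rule out that $S_{\mathcal{N}}$ collapses to a field after localization---which is precisely the remaining open case in the Bazzoni--Glaz conjecture.
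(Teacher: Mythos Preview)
Your proof is correct and follows essentially the same route as the paper: reduce to the local ring $R_{\mathfrak m}$, use the nilpotency-degree hypothesis to ensure $\mathcal N\neq D$, invoke Lemma~\ref{lemma5.1} to see that the maximal ideal of the further localization at $\mathcal N$ is nonzero, and then apply Proposition~\ref{prop2.1}. The only difference is cosmetic: the paper phrases the second localization as $R_{\mathfrak n}$ for a prime $\mathfrak n$ of $R$ and defers the final step to \cite[Theorem~6.4]{BG}, whereas you localize $(R_{\mathfrak m})_{\mathcal N}$ directly and spell out the conclusion via Proposition~\ref{prop2.1}; your version is, if anything, a bit cleaner.
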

\begin{proof}
Let $\mathfrak{m}$ be a maximal ideal of $R$ such that $R_{\mathfrak{m}}$ has a
non-zero nilpotent nilradical $\mathcal{N}$. Using Theorem \ref{thm2.1}, it follows
that $\mathcal{N}$ is the unique minimal prime ideal. Recall that the
Nilradical($S^{-1}R$)=$S^{-1}$(Nilradical($R$)) for any multiplicative closed set
$S\subset R$. Hence $\mathcal{N}=S^{-1}\mathfrak{n}$, where $\mathfrak{n}$ is the
nilradical of $R$ and $S=R\setminus \mathfrak{m}$ is a multiplicatively closed set
in $R$. Furthermore $\mathfrak{n}$ is a prime ideal of $R$. It is clear that the
maximal ideal of $R_{\mathfrak{n}}$ is nilpotent. It follows from Lemma
\ref{lemma5.1} that the maximal ideal of $R_{\mathfrak{n}}$ is non-zero. The rest of
the proof follows \cite[Theorem 6.4]{BG} mutatis mutandis.
\end{proof}
\begin{Remark}
The hypotheses that the nilpotency degree of $\mathcal{N}\geq 3$ in the above
Theorem ensures that $\mathcal{N}\neq D$.
\end{Remark}

We now give an example to show that the hypothesis on the nilpotency degree in
Theorem \ref{thm5.1} is necessary for the conclusion of Lemma \ref{lemma5.1} to
hold.

\begin{Example}
let $\mathbf{k}$ be a field and $\mathbf{k}[X,Y]$ be a polynomial ring in two
variables. Consider a set $S\subset \mathbf{k}[X,Y]/(XY, Y^2)$ defined by
\[
S=\{a+bY+a_1X+ a_2X^2+\cdots+a_nX^n , \;a,b,a_i\in \mathbf{k}, a\neq 0, n\geq 0\}.
\]
Then $S$ is multiplicative set in $\mathbf{k}[X,Y]/(XY, Y^2)$. Define $R=S^{-1}\big(
\mathbf{k}[X,Y]/(XY, Y^2)\big)$. It is easy to see that the unique maximal ideal of
$R$ is given by $\mathfrak{m}=\{xf(x)+b_1y \mid f(x)\in \mathbf{k}[x]\; and\; b_1\in
\mathbf{k}\}$, where $x, y$ are the images of $X, Y$ in $R$. Any $c,d\in
\mathfrak{m}$ has the form, $c=\lambda_1 y+c_1x+c_2x^2+\cdots c_nx^n$ and
$d=\lambda_2 y+d_1x+d_2x^2+\cdots +d_mx^m$, where $m,n\in \mathbb{N}$ and $c_i,
d_j\in \mathbf{k}$ for $1\leq i\leq n$ and $1\leq j\leq m$. Let $i,j\in \mathbb{N}$
be such that $c_i\ne 0$ and $d_j\neq 0$ and let $i,j$ be the least natural numbers
with this property. Then one can rewrite $c,d$ as $c=\lambda_1
y+x^i(c_i+c_{i+1}'x+\cdots+c_{n}'x^{n-i})$ and $d=\lambda_2
y+x^j(d_j+d_{j+1}'x+\cdots+d_{m}'x^{m-j})$, where $c_{t}'=c_{t+1}c_{i}^{-1}$ and
$d_{s}'=d_{s+1}d_{j}^{-1}$ for all $i\leq t\leq n-i$ and $j\leq s\leq n-j$. Observe
that $c_i+c_{i+1}'x+\cdots+c_{n}'x^{n-i}$ and $d_j+d_{j+1}'x+\cdots+d_{m}'x^{m-j}$
are units in $R$. Furthermore $(c,d)^2=(c^2,cd,d^2)$, $c^2=x^{2i}u_1^2$,
$d^2=x^{2j}u_{2}^2$ and $cd=x^{i+j}u_1u_2$, where $u_1,u_2\notin \mathfrak{m}$. Now
using Theorem \ref{thm2.2}$(iv)$, it can be verified that $R$ is a local Gaussian
ring. Its nilradical $\mathfrak{n}=(y)\subset R$ is not trivial, while the
nilradical of $R_\mathfrak{n}$ is trivial as $\frac{y}1=\frac{xy}x=0$.
\end{Example}

We now prove our Main Theorem.

\begin{theorem}[Main Theorem]\label{main}
Let $R$ be a non-reduced local Gaussian ring with nilradical $\mathcal{N}$. If
$\mathcal{N}\neq D$, then $\wgd (R)=\infty$.
\end{theorem}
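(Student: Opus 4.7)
The plan is to reduce the Main Theorem to the two special cases that have already been handled: a local Gaussian ring whose maximal ideal coincides with the nilradical and is either nilpotent (Proposition \ref{prop2.1}) or not nilpotent (Theorem \ref{thm1}). The natural way to force the maximal ideal to equal the nilradical is to localize $R$ at its nilradical $\mathcal{N}$, exactly as in the reworked proof of Theorem \ref{thm5.1}.

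First I would observe that by Theorem \ref{thm2.1}(iv), $\mathcal{N}$ is a prime ideal of $R$ (in fact the unique minimal prime), so the localization $R_{\mathcal{N}}$ is well-defined, and by Theorem \ref{thm2.1}(ii) it is again Gaussian. It is local with maximal ideal $\mathcal{N}R_{\mathcal{N}}$; since localization commutes with taking the nilradical and $\mathcal{N}$ is the minimal prime of $R$, this maximal ideal coincides with the nilradical of $R_{\mathcal{N}}$.

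Next I would invoke Lemma \ref{lemma5.1}: the hypothesis $\mathcal{N}\neq D$ (equivalent to $\mathcal{N}^2\neq 0$, because $D^2=0$ forces the implication $\mathcal{N}^2=0\Rightarrow \mathcal{N}\subseteq D$, while $D\subseteq \mathcal{N}$ always holds) guarantees that $\mathcal{N}R_{\mathcal{N}}$ is non-zero. Now split into two cases. If $\mathcal{N}R_{\mathcal{N}}$ is nilpotent, Proposition \ref{prop2.1} immediately yields $\wgd(R_{\mathcal{N}})=\infty$. If $\mathcal{N}R_{\mathcal{N}}$ is not nilpotent, then $R_{\mathcal{N}}$ is a local Gaussian ring whose non-nilpotent maximal ideal equals its nilradical, so Theorem \ref{thm1} gives $\wgd(R_{\mathcal{N}})=\infty$.

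Since flat dimension cannot grow under localization, $\wgd(R)\geq \wgd(R_{\mathcal{N}})=\infty$, completing the proof. All of the substantive work lies in Sections 3 and 4, especially the technical analysis of Property (\ref{hypo1}) that produces Theorem \ref{thm1}; given those inputs, the Main Theorem is a short localization argument, and the only bookkeeping point worth flagging is the equivalence $\mathcal{N}\neq D \Longleftrightarrow \mathcal{N}^2\neq 0$, which is immediate from $D^2=0$ and $D\subseteq \mathcal{N}$.
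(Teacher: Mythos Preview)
Your proof is correct and follows essentially the same route as the paper: localize at the minimal prime $\mathcal{N}$, use Lemma~\ref{lemma5.1} to ensure the resulting maximal ideal $\mathcal{N}'=\mathcal{N}R_{\mathcal{N}}$ is non-zero, and then split into the nilpotent and non-nilpotent cases. The only cosmetic difference is that you cite Proposition~\ref{prop2.1} directly for the nilpotent case and split according to whether $\mathcal{N}'$ is nilpotent, whereas the paper cites Theorem~\ref{thm5.1} and splits according to whether $\mathcal{N}$ is nilpotent; your version is arguably a touch cleaner, since it lines up exactly with the hypotheses of Proposition~\ref{prop2.1} and Theorem~\ref{thm1}.
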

\begin{proof} By Theorem \ref{thm2.1}, the nilradical $\mathcal{N}$ is the unique
minimal prime ideal. Thus the maximal ideal and the nilradical of $R_{\mathcal{N}}$
coincide and let us denote it by $\mathcal{N'}$. Since $\wgd(R)\geq
\wgd(R_{\mathcal{N}})$, it suffices to show that $\wgd (R_\mathcal{N})=\infty$.
Using Lemma \ref{lemma5.1}, it follows that $\mathcal{N'}\neq 0$. Hence we have a
local Gaussian ring $(R_\mathcal{N}, \mathcal{N'})$ with $\mathcal{N'}\neq 0$. If
$\mathcal{N}$ is nilpotent, then Theorem \ref{thm5.1} implies that $\wgd
(R_\mathcal{N})=\infty$. If $\mathcal{N}$ is not nilpotent, then Theorem \ref{thm1}
implies that $\wgd (R_\mathcal{N})=\infty$.
\end{proof}

We claim that to prove the Bazzoni-Glaz Conjecture, it remains to consider the case
of a local Gaussian ring with nilradical $\mathfrak{n}=D\neq 0$.
Let $R$ be a Gaussian ring (not necessarily local) and let
$\mathfrak{n}_{\mathfrak{p}}$ denote the nilradical of $R_{\mathfrak{p}}$ for any
$\mathfrak{p}\in \Spec (R)$. We have the
following cases:

\begin{itemize}
  \item[(i)]  $R_{\mathfrak{p}}$ is domain for all $\mathfrak{p}\in \Spec (R)$;
 \item[(ii)]  there exists a $\mathfrak{p}\in \Spec (R)$ such that the
$\mathfrak{n}_{\mathfrak{p}}\neq 0$ and $\mathfrak{n}^2_{\mathfrak{p}}\neq 0$;
   \item[(iii)]  there exists a $\mathfrak{p}\in \Spec (R)$ such that
$\mathfrak{n}_{\mathfrak{p}}\neq 0$ and $\mathfrak{n}^2_{\mathfrak{p}}= 0$.
\end{itemize}

We remind the reader that if $R_{\mathfrak{p}}$ is not a domain, then
$\mathfrak{n}_{\mathfrak{p}}\neq 0$ and hence all possible cases are listed above.
In case (i) $\wgd (R)\leq 1$, while in case (ii) $\wgd (R)= \infty$. Hence to prove
the Bazzoni-Glaz Conjecture it remains to show the following conjecture.

\begin{Conjecture}
Let $R$ be a non-reduced local Gaussian ring with nilradical $\mathcal{N}$. If
$\mathcal{N}^2=0$, then $\wgd (R)=\infty$.
\end{Conjecture}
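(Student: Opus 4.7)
First I would observe that the hypothesis $\mathcal{N}^2=0$ is equivalent to $\mathcal{N}=D$: the inclusion $D\subseteq\mathcal{N}$ is automatic, and $\mathcal{N}^2=0$ forces every $x\in\mathcal{N}$ to satisfy $x^2=0$. Consequently $R/D=R/\mathcal{N}$ is a local Gaussian domain, which by Tsang's theorem is a valuation domain $V$, and $R$ fits into a square-zero extension $0\to D\to R\to V\to 0$. If $\mathfrak{m}=D$ then $\mathfrak{m}^2=0$, $\mathfrak{m}$ is nilpotent and nonzero, and Proposition~\ref{prop2.1} immediately yields $\wgd R=\infty$, so I will assume $\mathfrak{m}\supsetneq D$. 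In that case $V$ is a non-trivial valuation domain and $\mathfrak{m}$ itself cannot be nilpotent, because a nilpotent maximal ideal must coincide with the nilradical.

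Note that the localisation trick of the Main Theorem is unavailable: the Example in Section~5 shows that under $\mathcal{N}=D$ the ring $R_\mathcal{N}$ can be a field, so Lemma~\ref{lemma5.1} has no substitute here. I would therefore attack $R$ directly and split into two cases, (I) some element of $\mathfrak{m}$ is a non-zero-divisor, and (II) every element of $\mathfrak{m}$ is a zero divisor. In case~(II), Proposition~\ref{prop1.1} gives $\wgd R\geq 3$; assuming $\wgd R=n<\infty$, Lemmas~\ref{lemma4} and~\ref{lemma5} yield $\Tor_n(R/D,-)=0$ and $\Tor_n(R/\mathfrak{m},-)=0$. I would then use Lemma~\ref{lemma2} to enlarge any witness ideal to contain $D$ and exploit the valuation-domain structure of $V$ together with a direct-limit reduction on finitely generated ideals, obtaining a single $x\in\mathfrak{m}\setminus D$ with $\wgd_R(V/\overline xV)=n$.

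The main obstacle will be constructing analogues of Lemma~\ref{flat}(iii) and Lemma~\ref{lemma1.2} outside the setting where $\mathfrak{m}$ is the nilradical. In Section~4, flatness of $\mathfrak{m}^2$ was crucial for showing $\Tor_{n-1}(R/\mathfrak{m},-)=0$, and its proof relied on $\mathfrak{m}=\mathfrak{m}^2+D$ together with Property~(\ref{hypo1}); here $\mathfrak{m}^2$ need not even lie in $D$. My tentative substitute is the preimage $K_x=\pi^{-1}(\overline xV)=xR+D$ for $x\in\mathfrak{m}\setminus D$ with $(0:x)\subseteq D$: since $\overline xV$ is a principal ideal of the valuation domain $V$ and the extension $0\to D\to K_x\to\overline xV\to 0$ is controlled by $D^2=0$, I would try to prove $K_x$ is $R$-flat, and then feed this into the exact sequence $0\to xR'/x\mathfrak{m}'\to R'/x\mathfrak{m}'\to R'/xR'\to 0$ (with $R'=V$) to force the three incompatible $\Tor$-vanishings of Case~2 of Theorem~\ref{thm1}. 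Case~(I) would require a separate, likely simpler argument, reducing by a non-zero-divisor $x\in\mathfrak{m}$ to an instance of case~(II) over $R/xR$.

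I expect the flatness claim for $K_x$ to be the hardest step: even with $D^2=0$ and $\overline xV$ flat over $V$, flatness of $K_x$ over $R$ is a genuinely new assertion, because the change-of-rings exact sequences for $R\twoheadrightarrow V$ involve contributions from $\Tor^R_\ast(V,D)$ that have no obvious reason to vanish. It is conceivable that the conjecture requires a new structural insight into how $D$ embeds in $R$ as a $V$-module (for instance, torsion-freeness, or an appropriate compatibility between the valuation filtration on $V$ and the $D$-filtration on $R$) that goes beyond what the current paper provides.
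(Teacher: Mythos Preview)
The statement you are attempting to prove is not proved in the paper. It is the final \emph{open} Conjecture: the whole point of the paper is to settle the Bazzoni--Glaz conjecture in every case \emph{except} $\mathcal{N}^2=0$, and the authors explicitly leave this case as a conjecture for future work. There is therefore no ``paper's own proof'' to compare your proposal against.

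As for the proposal itself, you correctly identify the equivalence $\mathcal{N}^2=0\Leftrightarrow\mathcal{N}=D$, correctly dispose of the subcase $\mathfrak{m}=D$ via Proposition~\ref{prop2.1}, and correctly note that the localisation trick behind the Main Theorem is blocked by the Example in Section~5. But what remains is not a proof: the two load-bearing steps---flatness of $K_x=xR+D$ over $R$, and the reduction in Case~(I) from $R$ to $R/xR$---are both unestablished, and you yourself flag the first as possibly requiring ``a new structural insight\ldots that goes beyond what the current paper provides.'' Concretely, flatness of $K_x$ does not follow from $D^2=0$ and flatness of $\overline{x}V$ over $V$ by any standard change-of-rings argument, and the Case~(I) reduction is problematic because there is no general inequality relating $\wgd(R)$ and $\wgd(R/xR)$ in the direction you need. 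Your outline is a reasonable programme, but it remains a programme; the conjecture is, as the paper states, open.
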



\begin{thebibliography}{[***]}

\bibitem{AJK} J. Abuhlail, M. Jarrar, S. Kabbaj, Commutative rings in which every
finitely generated ideal is quasi-projective, preprint. Los Alamos preprint version
in the site arXiv:0810.0359v2 [Math.AC], 2010.

\bibitem{BG} S. Bazzoni and S. Glaz, Gaussian Properties of total rings of
quotients, {\it J. of Algebra}
\textbf{310} (2007), 180--193.

\bibitem{F} L. Fuchs, Ber die Ideale arithmetischer ringe, {\it Math. Helv.}
\textbf{23} (1949), 334--341.

\bibitem{RG} R. Gilmer, {\it Multiplicative Ideal Theory}, Marcel Dekker, 1972.

\bibitem{SG1} S. Glaz, {\it Commutative Coherent Rings}, Lecture Notes in Math.
\textbf{1371}, Springer-Verlag, Berlin, 1989.

\bibitem{SG} S. Glaz,  Weak dimension of Gaussian rings, {\it Proc. Amer. Math. Soc.}
\textbf{133} (2005), 2507--2513.

\bibitem{J} C.U. Jensen, Arithmetical rings, {\it Acta Math. Hungr.}
\textbf{17} (1966), 115--123.

\bibitem{L} T.G. Lucas, The Gaussian Property for Rings and Polynomials, {\it
Houston J. of Math}
\textbf{34} (2008), 1--18.

\bibitem{O} B. Osofsky, Global dimension of commutative rings with linearly ordered
ideals, {\it J. London Math. Soc.}
\textbf{44} (1969), 183--185.

\bibitem{T} H. Tsang, {\it Gauss's Lemma}, Ph.D. Thesis, University of Chicago, 1965.


\end{thebibliography}
\end{document}